\newcommand{\R}{\mathbb{R}}
\newcommand{\Z}{\mathbb{Z}}
\newcommand{\s}{\mathbb{S}}
\newcommand{\D}{\mathcal{D}}
\newcommand{\re}{\mathbb{R}}
\newcommand{\N}{\mathbb{N}}
\newcommand{\ep}{\varepsilon}
\newcommand{\Fgpl}{F_{\gamma,p,\lambda}}
\newcommand{\Egpl}{E_{\gamma,p,\lambda}}
\newcommand{\psigpl}{\psi_{\gamma,p,\lambda}}
\newcommand{\DFgpl}{DF_{\gamma,p,\lambda}}
\newcommand{\ggpl}{g_{\gamma,p,\lambda}}
\newcommand{\Vis}{\mathcal{V}}
\newcommand{\Imp}{\mathcal{I}}
\newcommand{\loc}{\mathrm{loc}}
\newcommand{\Leb}{\ensuremath{\mathscr{L}}}
\newtheorem{thm}{Theorem}[section]
\newtheorem{thmbibl}{Theorem}
\newtheorem{rmk}[thm]{Remark}
\newtheorem{prop}[thm]{Proposition}
\newtheorem{question}[thmbibl]{Question}
\title{Gamma-liminf estimate for a class of non-local approximations of Sobolev and BV norms}
\author{
Massimo Gobbino\vspace{1ex}\\ 
{\normalsize Università di Pisa} \\
{\normalsize Dipartimento di Matematica}\\ 
{\normalsize PISA (Italy)}\\  
{\normalsize e-mail: \texttt{massimo.gobbino@unipi.it}}
\and
Nicola Picenni\vspace{1ex}\\ 
{\normalsize Università di Pisa} \\
{\normalsize Dipartimento di Matematica}\\ 
{\normalsize PISA (Italy)}\\  
{\normalsize e-mail: \texttt{nicola.picenni@dm.unipi.it}}
}
\date{}
\begin{document}

\maketitle

\begin{abstract}

We consider a family of non-local and non-convex functionals, and we prove that their Gamma-liminf is bounded from below by a positive multiple of the Sobolev norm or the total variation. As a by-product, we answer some open questions concerning  the limiting behavior of these functionals.

The proof relies on the analysis of a discretized version of these functionals.

\vspace{6ex}

\noindent{\bf Mathematics Subject Classification 2020 (MSC2020):} 49J45, 26D10, 46E35, 26B30.

\vspace{6ex}


\noindent{\bf Key words:} 
Sobolev spaces, functions of bounded variation, non-local functionals, Gamma-convergence.

\end{abstract}

\section{Introduction}

In this paper we answer some questions concerning a class of non-local, non-convex functionals introduced in~\cite{2022-BSVY-Lincei,BSVY}, and related to the Sobolev norm and the total variation. 

In order to introduce these functionals, we consider a positive integer $N$, an open set $\Omega \subseteq \R^N$, and three real parameters $\gamma\in\R$, $p\geq 1$ and $\lambda>0$. Following~\cite{BSVY}, we consider the measure $\nu_\gamma$ on $\R^N\times \R^N$ defined by
$$\nu_\gamma (A):=\iint_A |y-x|^{\gamma-N}\,dx\,dy,$$
and for every measurable function $u:\Omega\to\R$ we introduce the set
\begin{equation*}
\Egpl(u,\Omega):=\left\{(x,y)\in \Omega\times\Omega:|u(y)-u(x)|>\lambda |y-x|^{1+\gamma/p}\right\}.
\end{equation*}

We are now ready to define the family of functionals 
\begin{equation}\label{defn:Fgpl}
\Fgpl(u,\Omega):=\lambda^p \nu_\gamma(\Egpl(u,\Omega))
\qquad
\forall u\in L^1_\loc(\Omega).
\end{equation}

In order to describe some relations between the functionals (\ref{defn:Fgpl}), the Sobolev norms, and the total variation, let $\dot{W}^{1,p}(\Omega)$ denote the space of functions $u\in L^1 _{\loc}(\Omega)$ whose distributional gradient $\nabla u$ belongs to $L^p(\Omega)$, and let $\dot{BV}(\Omega)$ denote the space of functions $u\in L^1 _{\loc}(\Omega)$ whose distributional gradient $Du$ is a finite $\R^N$-valued Radon measure on $\Omega$. Then for every $u\in L^1 _{\loc}(\Omega)$ we set
\begin{equation}
F_p(u,\Omega):=\begin{cases}
\|\nabla u\|_{L^p(\Omega)} ^p & \mbox{if $p>1$ and $u\in \dot{W}^{1,p}(\Omega)$},
\\[0.5ex]
|Du|(\Omega) & \mbox{if $p=1$ and $u\in \dot{BV}(\Omega)$},
\\
+\infty & \mbox{otherwise}.
\end{cases}
\label{defn:Fp}
\end{equation}

We consider also the geometric constant 
\begin{equation}\label{defn:C_N}
C_{N,p}:=\int_{\s^{N-1}} |\langle v,x\rangle|^p\,d\mathcal{H}^{N-1}(x),
\end{equation}
where $v$ is any element of the unit sphere $\s^{N-1}$ with center in the origin of $\R^N$ (of course the definition does not depend on the choice of $v$).

\paragraph{\textmd{\textit{Previous literature and Brezis' open problems}}}

The functionals (\ref{defn:Fgpl}) generalize some families of functionals considered previously in the literature, and corresponding to special values of $\gamma$. 

In the case $\gamma=-p$, the family $\{F_{-p,\lambda,p}\}$ was first studied in \cite{2006-Nguyen-JFA,2011-Nguyen-Duke} (see also the more recent developments in \cite{2018-CRAS-AGMP,2020-APDE-AGMP,2020-APDE-AGP,2018-AnnPDE-BN,2020-CCM-BN}). In this special case the two main results are the following.
\begin{itemize}
    \item (Pointwise convergence) For every $p>1$ it turns out that
    $$\lim_{\lambda\to 0^+} F_{-p,p,\lambda}(u,\R^N)=
    \frac{C_{N,p}}{p}\cdot F_p(u,\R^N) 
    \qquad 
    \forall u \in L^p(\R^N).$$

    In the case $p=1$, the analogous limit holds true when $u\in C^1_c(\R^N)$, but not necessarily when $u\in W^{1,1}(\R^N)$.

    \item  (Gamma convergence)  For every $p\geq 1$ it turns out that
    $$\Gamma\mbox{--}\lim_{\lambda\to 0^+} F_{-p,p,\lambda}(u)=
    \frac{C_{N,p}}{p}\cdot k_{p}\cdot F_p(u,\R^N) 
    \qquad \forall u \in L^p(\R^N),$$
    where
    \begin{equation*}
    k_p:=\begin{cases}
    \dfrac{1}{p-1}\left(1-\dfrac{1}{2^{p-1}}\right) &\mbox{if $p>1$},
    \\[1.5ex]
    \log 2 &\mbox{if $p=1$}.
    \end{cases}
\end{equation*}
    Note that, quite surprisingly, the Gamma-limit is different from the pointwise limit.

\end{itemize}

The more recent literature \cite{2021-BVY-PNAS,2021-BVY-CalcVar,2022-Poliakovsky-JFA,Brezis-Lincei} (see also \cite{2022-JFA-DLYYZ,2023-CalcVar-DLYYZ}) focused on the special case $\gamma=N$, in which the measure $\nu_N$ is equal to the Lebesgue measure, and the supremum of $F_{N,p,\lambda}(u,\Omega)$ over all $\lambda>0$ is by definition the weak $L^p(\Omega\times\Omega)$ quasi-norm of the function
$$\frac{|u(y)-u(x)|}{|y-x|^{N+p}}.$$

Finally, in \cite{2022-BSVY-Lincei,BSVY} (see also \cite{2023-CalcVar-ZYY,2023-CCM-ZYY}) the general case $\gamma \in \R$ was considered. Here, motivated mainly by \cite{Brezis-Lincei}, we restrict to the case $\gamma>0$, which of course includes the case $\gamma=N$. Let us describe some of the results that were known in this setting (we stress that what is interesting in the case $\gamma<0$ is the limiting behavior as $\lambda\to 0^+$, while for $\gamma>0$ it is the limiting behavior as $\lambda\to+\infty$).

\begin{itemize}

\item (Global pointwise estimate from above and below) In~\cite[Theorem~1.3 and Theorem~1.4]{BSVY} it was proved that there exist two constants $c_1(N,\gamma,p)$ and $c_2(N,\gamma,p)$ such that
\begin{equation*}
c_1(N,\gamma,p)F_p(u,\R^N)\leq \sup_{\lambda>0} \Fgpl(u,\R^N)\leq c_2(N,\gamma,p)F_p(u,\R^N),
\end{equation*}
for every $u\in L^1 _{\loc}(\R^N)$.

\item  (Pointwise convergence in Sobolev spaces) In~\cite[Theorem~1.1]{BSVY} it was proved that for every $p\geq 1$ it holds that
\begin{equation}\label{lim_W1p}
\lim_{\lambda\to +\infty} \Fgpl(u,\R^N)=
C_{N,p}\cdot\frac{1}{\gamma}\cdot F_p(u,\R^N) 
\qquad 
\forall u\in \dot{W}^{1,p}(\R^N).
\end{equation}

\item  (Pointwise convergence and estimates for BV functions) In the case $p=1$, the pointwise convergence result was extended and improved in several directions. In~\cite[Lemma~3.6]{BSVY} it was proved that, if $E\subseteq \R^N$ is a bounded convex set with smooth boundary, then
\begin{equation*}
\lim_{\lambda\to +\infty} F_{\gamma,1,\lambda}(\mathbbm{1}_E,\R^N)=
C_{N,1}\cdot\frac{1}{\gamma+1} F_1 (\mathbbm{1}_E,\R^N),
\end{equation*}
where $\mathbbm{1}_E$ is the function that is equal to~1 if $x\in E$ and 0 otherwise.

More recently, in~\cite[Theorem~1.1]{2023-Picenni}) it was proved that for every $u \in \dot{BV}(\R^N)$ it turns out that
\begin{multline*}\label{liminf_BV}
\liminf_{\lambda\to +\infty} F_{\gamma,1,\lambda}(u,\R^N) \geq
\\
C_{N,1}\left\{\frac{1}{\gamma} |D^a u|(\R^N) + \frac{1}{\gamma+1} |D^j u|(\R^N) + \frac{\gamma}{2(2\gamma+1)(\gamma+1)}|D^c u|(\R^N)\right\}
\end{multline*}
where $D^a u$, $D^j u$ and $D^c u$ denote, respectively, the absolutely continuous part, the jump part and the Cantor part of the distributional derivative of $u$. In addition, in ~\cite[Theorem~1.2]{2023-Picenni}) it was proved that
\begin{equation}\label{lim_SBV}
\lim_{\lambda\to +\infty} F_{\gamma,1,\lambda}(u,\R^N) = 
C_{N,1}\left\{\frac{1}{\gamma} |D^a u|(\R^N) + \frac{1}{\gamma+1} |D^j u|(\R^N)\right\}
\end{equation}
for every $u \in \dot{BV}(\R^N)$ with $D^c u=0$.

Finally, in~\cite[Theorem~1.1]{2023-Lahti}) the pointwise estimate from below in~\cite{2023-Picenni} was improved by showing that the constants in front of the jump part and the Cantor part are equal, namely
\begin{equation*}\label{liminf_BV_2}
\liminf_{\lambda\to +\infty} F_{\gamma,1,\lambda}(u,\R^N) \geq C_{N,1}\left\{\frac{1}{\gamma} |D^a u|(\R^N) + \frac{1}{\gamma+1} \left(|D^j u|(\R^N) + |D^c u|(\R^N)\right)\right\},
\end{equation*}
but again just for $u\in\dot{BV}(\R^N)$.

\end{itemize}

The extension of the convergence results and estimates quoted above to every $u\in L^1_{\loc} (\R^N)$ does not seem to be straightforward. Indeed, the following questions were raised in~\cite[Section~9]{Brezis-Lincei} and in~\cite[Section~7.2]{BSVY}.
\begin{question}\label{question_constant}
Let $\gamma>0$ and $p\geq 1$ be real numbers, and let $u\in L^1_{\loc}(\R^N)$ be a function such that
$$\liminf_{\lambda\to+\infty} \Fgpl(u,\R^N)=0.$$
Can we conclude that $u$ is constant (almost everywhere)?
\end{question}

\begin{question}\label{question_liminf}
Let $\gamma>0$ and $p\geq 1$ be real numbers. Is there a positive constant $c(N,\gamma,p)>0$ such that
$$\liminf_{\lambda\to+\infty} \Fgpl(u,\R^N)\geq c(N,\gamma,p) F_p(u,\R^N),$$ 
for every function $u\in L^1_{\loc}(\R^N)$?
\end{question}

\begin{question}\label{question_Gamma-lim}
Let $\gamma>0$ and $p\geq 1$ be real numbers. Is there a positive constant $c(N,\gamma,p)>0$ such that
$$\Gamma\mbox{--}\lim_{\lambda\to+\infty} \Fgpl(u,\R^N) = c(N,\gamma,p) F_p(u,\R^N),$$ 
for every function $u\in L^1_{\loc}(\R^N)$?
\end{question}

We conclude by mentioning that in~\cite{2022-Poliakovsky-JFA} a positive answer is given, in the case $\gamma=N$, to the weaker version of the first two questions with the limsup instead of the liminf. The proof relies on the so-called BBM formula (see~\cite{BBM,Davila-BBM}).

\paragraph{\textmd{\textit{Our contribution}}}

In this paper we give a complete answer to Question~\ref{question_constant} and Question~\ref{question_liminf} as stated above, and a partial answer to Question~\ref{question_Gamma-lim}. In fact, our main result is the following.

\begin{thm}\label{thm:main}
Let $\gamma>0$ and $p\geq 1$ be real numbers, let $N$ be a positive integer, and let $\Omega\subseteq \R^N$ be any open set. 

Then the families of functions defined by (\ref{defn:Fgpl}) and (\ref{defn:Fp}) satisfy
\begin{equation}
\Gamma\mbox{--}\liminf_{\lambda\to+\infty} \Fgpl(u,\Omega) \geq 
C_{N,p}\cdot c_\gamma\cdot F_p(u,\Omega) 
\qquad 
\forall u \in L^1 _{\loc} (\Omega),
\label{th:main}
\end{equation}
where $C_{N,p}$ is the geometric constant defined by (\ref{defn:C_N}), and
\begin{equation}\label{defn:c_gamma}
c_\gamma:=\frac{\log 2}{2^{\gamma+1}-1}.
\end{equation}

More precisely, for every $u\in L^1 _{\loc}(\Omega)$ and every family of functions $\{u_\lambda\}\subseteq L^1_{\loc}(\Omega)$ such that $u_\lambda \to u$ in $L^1 _{\loc}(\Omega)$ as $\lambda \to +\infty$, it holds that
\begin{equation*}
\liminf_{\lambda\to+\infty} \Fgpl(u_\lambda,\Omega) \geq 
C_{N,p}\cdot c_\gamma\cdot F_p(u,\Omega).
\end{equation*}
\end{thm}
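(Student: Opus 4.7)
The plan is to reduce Theorem~\ref{thm:main} to a one-dimensional statement via slicing, and then to exploit a dyadic decomposition of the radial integration variable. The structure of the constant $c_\gamma = \log 2/(2^{\gamma+1}-1)$ is already suggestive: the factor $\log 2$ is the Haar mass of a single dyadic annulus in $r = |y-x|$ under $dr/r$, while $1/(2^{\gamma+1}-1)$ is the total mass of the geometric series summing the contributions of all dyadic scales below a threshold.

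\textbf{Step 1 (slicing).} Passing to polar coordinates $y = x + rv$ with $r>0$ and $v \in \s^{N-1}$ factorizes $|y-x|^{\gamma-N}\,dx\,dy = r^{\gamma-1}\,dr\,d\mathcal{H}^{N-1}(v)\,dx$. Decomposing $x = \xi + sv$ with $\xi \in v^\perp$ then gives
\begin{equation*}
\Fgpl(u,\Omega) = \tfrac{1}{2}\int_{\s^{N-1}}\int_{v^\perp} \Fgpl^{\mathrm{1D}}(u_{\xi,v},I_{\xi,v})\,d\xi\,d\mathcal{H}^{N-1}(v),
\end{equation*}
where $u_{\xi,v}(s) := u(\xi+sv)$ is the slice of $u$ along $\xi + \R v$, $I_{\xi,v}$ is the corresponding slice of $\Omega$, and $\Fgpl^{\mathrm{1D}}$ denotes the obvious one-dimensional analog. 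The identity $\int_{\s^{N-1}}|\langle \nabla u,v\rangle|^p\,d\mathcal{H}^{N-1}(v) = C_{N,p}|\nabla u|^p$ and its $BV$ counterpart show that the corresponding slicing of $F_p$ produces the factor $C_{N,p}$. After extracting (by Fubini) a subsequence along which the slices converge in $L^1_\loc$, Fatou's lemma reduces the theorem to a one-dimensional version: $\liminf_{\lambda\to\infty}\Fgpl^{\mathrm{1D}}(u_\lambda,I) \geq 2c_\gamma F_p^{\mathrm{1D}}(u,I)$ for every $u_\lambda \to u$ in $L^1_\loc(I)$.

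\textbf{Step 2 (dyadic decomposition).} In one dimension, the substitution $r = 2^{n+t}$ with $n\in\Z$, $t\in[0,1)$ rewrites the radial integral as
\begin{equation*}
\Fgpl^{\mathrm{1D}}(u,I) = 2\lambda^p \log 2 \int_0^1 \mathcal{S}_t(u;\lambda)\,dt,\qquad \mathcal{S}_t(u;\lambda) := \sum_{n\in\Z} 2^{(n+t)\gamma}\,\mu_{2^{n+t}}(u;\lambda),
\end{equation*}
with $\mu_r(u;\lambda) := |\{x\in I : x+r\in I,\ |u(x+r)-u(x)| > \lambda r^{1+\gamma/p}\}|$. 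The factor $\log 2$ is now explicit, and by Fatou in $t$ it suffices to prove the discrete estimate
\begin{equation*}
\liminf_{\lambda\to\infty} \lambda^p\,\mathcal{S}_t(u_\lambda;\lambda) \geq \frac{F_p^{\mathrm{1D}}(u,I)}{2^{\gamma+1}-1}\qquad\text{for a.e.\ }t\in[0,1]
\end{equation*}
along any $u_\lambda \to u$ in $L^1_\loc(I)$.

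\textbf{Step 3 (discrete estimate and main obstacle).} The constant $1/(2^{\gamma+1}-1)$ is sharp already at the level of a single jump: for $u = a\mathbbm{1}_{x>0}$ one has $\mu_r = r\cdot\mathbbm{1}_{r\leq R_0}$ with $R_0 := (a/\lambda)^{p/(\gamma+p)}$, so $\lambda^p\mathcal{S}_t$ reduces to the geometric sum $\lambda^p\sum_{n: 2^{n+t}\leq R_0}2^{(n+t)(\gamma+1)}$, which equals the contribution of the jump to $F_p^{\mathrm{1D}}$ divided by $2^{\gamma+1}-1$, up to a multiplicative factor in $[1,2^{\gamma+1}]$ depending on the position of $R_0$ inside a dyadic annulus; averaging over $t$ is what absorbs this discretization defect. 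Summing over well-separated jumps or monotonicity intervals of a piecewise constant or piecewise affine approximation of $u$ should give the bound additively, and for smooth $u$ a direct linearization $|u(x+r)-u(x)|\sim r|u'(x)|$ yields the even better constant $1/(2^\gamma-1)$. The principal obstacle I anticipate is the passage to general recovery sequences $u_\lambda$ with no a priori bound in $BV$ or $W^{1,p}$: one must count, scale by scale, only those dyadic transitions of $u_\lambda$ that persist in the $L^1_\loc$-limit, and avoid double counting when a single local oscillation of $u$ activates several scales $r_n$ simultaneously. The freedom to average over $t\in[0,1]$ is what should allow one to avoid unfavorable commensurabilities between these oscillations and the fixed dyadic grid, preserving the sharp constant $c_\gamma$.
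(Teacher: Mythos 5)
Your Steps 1 and 2 are correct, and they are essentially the paper's own reduction in disguise: the slicing identity is the same one used in section~\ref{sec:proof-HD}, and the substitution $r=2^{n+t}$ is, after integrating out the grid offset, exactly the dyadic representation of Proposition~\ref{prop:dyadic_repr} (your factor $\log 2$ and the paper's $\int_{1/2}^1\delta^{-1}d\delta$ are the same object). The problem is Step 3: the inequality $\liminf_\lambda \lambda^p\mathcal{S}_t(u_\lambda;\lambda)\geq F_p(u,I)/(2^{\gamma+1}-1)$ for arbitrary $u\in L^1_\loc$ and arbitrary $u_\lambda\to u$ in $L^1_\loc$ \emph{is} the theorem, and you only verify it on a single jump and on smooth functions, while explicitly naming (but not resolving) the two real difficulties: recovery families with no $BV$ or Sobolev bound, and the multi-scale double-counting. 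Moreover, your quantity $\mathcal{S}_t$ integrates the bad set over $x$ separately at each scale $2^{n+t}$, which destroys precisely the structure needed to attack those difficulties: one needs, for a fixed offset $x$ and a fixed $\delta$, the differences of $u_\lambda$ on the whole nested family of points $x+\delta i/2^k$ simultaneously, so that a chaining argument can run across scales. This is what the dyadic functional (\ref{defn:DF}) retains and what Proposition~\ref{prop:est_DF} exploits: dyadic intervals that are not ``visible'' are telescoped (with H\"older for $p>1$) to bound $B-A$ by the ``important'' intervals as in (\ref{est_J}), and each important interval forces visible intervals at every finer scale, so the geometric series $\sum_{k>h}2^{-k(\gamma+1)}$ produces $1/(2^{\gamma+1}-1)$ without any double counting. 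Nothing in your proposal substitutes for this combinatorial core.

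The second missing ingredient is the passage from model oscillations to general data. Summing over jumps of ``a piecewise constant or piecewise affine approximation of $u$'' does not give a Gamma-liminf bound: the estimate must hold for the limit $u$ itself, tested against families $u_\lambda$ about which you know only $L^1_\loc$ convergence. The paper handles this by a cell problem (Proposition~\ref{prop:gatto}): between two Lebesgue points $c<d$ of $u$ one truncates $u_\lambda$ to $[u(c)+2\ep,\,u(d)-2\ep]$ (truncation only decreases the functional), uses the Lebesgue-point density estimates to verify hypothesis (\ref{hp:density_ep}), applies the dyadic estimate for all offsets $x$ in a set $X_\delta$ of almost full measure and all $\delta\in[1/2,1]$, and finally recovers $F_p(u,(a,b))$ as the supremum of $\sum_i |u(x_{i+1})-u(x_i)|^p/(x_{i+1}-x_i)^{p-1}$ over finite families of Lebesgue points. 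You would need to supply analogues of all of these steps (the fixed-$t$ statement you reduce to is true, but only because it follows from this machinery); as it stands, the proposal is a correct reformulation of the problem together with a consistency check of the constant $c_\gamma$, not a proof.
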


Let us comment a little on the state of the art after our result.

\begin{rmk}[Toward the pointwise limit]
\begin{em}

When $p>1$ now we know that the pointwise limit is given by (\ref{lim_W1p}) for every $u\in L^1_\loc(\R^N)$, and this settles the issue of pointwise convergence for $p>1$.

When $p=1$ now we know that the pointwise limit is given by (\ref{lim_SBV}) both if $u\in \dot{BV}(\R^N)$ with $D^c u=0$, and if $u\in L^1_\loc(\R^N)\setminus\dot{BV}(\R^N)$. Moreover, the estimate from below in~\cite{2023-Lahti} now holds true for every $u\in L^1_\loc(\R^N)$.

It should be possible to extend these results to open sets $\Omega\subseteq\R^N$ with smooth enough boundary, or to any open set if one restricts the set $\Egpl(u,\Omega)$ to the pairs $(x,y)$ that ``see each other'' (see~\cite[Section~7]{1998-CPAM-Gobbino} and~\cite[Remark~4.6]{2001-PRSE-GM}).

What remains open is the case where $p=1$ and $u$ is a bounded variation function whose distributional derivative has a nontrivial Cantor part. In this case, as far as we know, even the existence of the pointwise limit is unclear.

\end{em}
\end{rmk}

\begin{rmk}[Toward the Gamma-limit]
\begin{em}

In the case $p>1$ the pointwise convergence result (\ref{lim_W1p}) implies that
\begin{equation}
\Gamma\mbox{--}\limsup_{\lambda\to +\infty} \Fgpl(u,\Omega) \leq 
\frac{C_{N,p}}{\gamma} \cdot F_p(u,\Omega)
\qquad 
\forall u \in L^1 _{\loc} (\Omega),
\label{eqn:Gamma-p>1}
\end{equation}
at least when $\Omega$ is smooth enough, while in the case $p=1$ the pointwise convergence result (\ref{lim_SBV}) implies that
\begin{equation}
\Gamma\mbox{--}\limsup_{\lambda\to +\infty} F_{\gamma,1,\lambda}(u,\Omega) \leq 
\frac{C_{N,1}}{\gamma+1} \cdot F_1(u,\Omega)
\qquad 
\forall u \in L^1 _{\loc} (\Omega),
\label{eqn:Gamma-p=1}
\end{equation}
again when $\Omega$ is smooth enough.

For the time being, the existence of the Gamma-limit remains unclear. It might be possible to extend the approach of~\cite{2011-Nguyen-Duke} in order to provide a positive answer to Question~\ref{question_Gamma-lim} with some constant $C(N,\gamma,p)$ (positive because of Theorem~\ref{thm:main}) defined as the solution of an appropriate cell problem. On the other hand, this approach would not answer the question whether the Gamma-limit coincides with the (relaxation of) the pointwise limit, namely with the right-hand side of (\ref{eqn:Gamma-p>1}) for $p>1$, and of (\ref{eqn:Gamma-p=1}) if $p=1$.

Now the challenge is to improve the estimate from below in (\ref{th:main}). Indeed, we suspect that the constant $c_\gamma$ given by (\ref{defn:c_gamma}) is non-optimal, because in the proof we combine several inequalities that are optimal when considered one by one, but with different equality cases. Alternatively, it would be interesting to find an example of a family $u_\lambda\to u$ for which the limit of $\Fgpl(u_\lambda,\Omega)$ is strictly less than the right-hand side of (\ref{eqn:Gamma-p>1}) or (\ref{eqn:Gamma-p=1}), as it happens in the case $\gamma=-p$.

\end{em}
\end{rmk}

\paragraph{\textmd{\textit{Overview of the technique}}}

The first steps of the proof follow a rather standard path.

First of all, we reduce ourselves to the one-dimensional case through an integral geometric approach. The main idea is that the functionals (\ref{defn:Fgpl}) and (\ref{defn:Fp}) are in some sense the average of the same functionals computed on the one-dimensional slices of $u$, namely the restrictions of $u$ to all possible lines in $\R^N$. We refer to section~\ref{sec:proof-HD} for the details.

We stress that here the flexibility provided by the parameter $\gamma$ plays a role. Indeed, the problem in dimension $N$ with $\gamma=N$ reduces to the one-dimensional problem with $\gamma=N$, and therefore solving the one-dimensional case with $\gamma=1$ is not enough to answer the questions posed in~\cite{Brezis-Lincei} in the case $\gamma=N$.

After reducing to dimension one, we localize the problem, and we reduce ourselves to estimating from below the cost of an oscillation. Roughly speaking, this amounts to considering a sort of cell problem of the following form.

\begin{quote}
    Suppose that $u:\R\to\R$ is a measurable function such that $u(x)=A$ for $x\leq a$, and $u(x)=B$ for $x\geq b$. Can we estimate $\Fgpl(u,\R)$ from below in terms of $|B-A|$ and $b-a$?
\end{quote}

We answer this question in a quantitative way in Proposition~\ref{prop:gatto}. In order to prove this result, we generalize the approach of~\cite{1998-CPAM-Gobbino,2001-PRSE-GM}. The original idea was to fix $x\in\R$ and a step $\delta>0$, and then considering a discretized version of the functional that takes into account only the values of $u$ in the points of  the form $x+\delta i$, with $i\in\Z$. Here this is not enough, and our discretized version of (\ref{defn:Fgpl}) has to take into account all the values of $u$ in the points of the form $x+\delta i/2^k$, with $i\in\Z$ and $k\in\N$. This lead us to introduce in (\ref{defn:DF}) what we call the dyadic functional.

Again, the main idea is that the functional (\ref{defn:Fgpl}) in dimension one is the integral of the dyadic functional over all possible choices of $x$ and $\delta$ (see Proposition~\ref{prop:dyadic_repr}), and hence we have reduced ourselves to estimating from below the dyadic functional. Since we are now in an almost discrete setting, this is a sort of combinatorial problem. For the details we refer to Proposition~\ref{prop:est_DF}, that is the technical core of the paper.

\paragraph{\textmd{\textit{Comparison with similar discretization arguments for non-local functionals}}}

The idea of reducing the computation of Gamma-limits to cell problems is classical, and goes back to the dawning of Gamma-convergence. In the context of non-local functionals, the estimate for the cell problem is usually obtained through some kind of discretization of the energy. 

In~\cite{2006-CRAS-BN} (see also \cite{2008-JEMS-Nguyen,2011-Nguyen-Duke,2011-CalcVar-Nguyen}) the functionals (\ref{defn:Fgpl}) with $\gamma=-p$ are considered, and the discretization is performed in a ``vertical'' way (or \emph{à la Lebesgue}, as observed by J.~M.~Morel), namely by considering the sets $A_j:=\{x:(j-1)2^{-k}\leq u(x)<j2^{-k}\}$, where $k$ is a suitably chosen but \emph{fixed} positive integer, and by estimating the contributions to the integral of the pairs $(x,y)\in A_{j_1} \times A_{j_2}$ with $j_1\neq j_2$. The estimate in \cite{2006-CRAS-BN} was then improved in \cite{2020-APDE-AGMP}, where the discretization is combined with a discrete rearrangement inequality. This results in an energy estimate that is asymptotically optimal and solves completely the cell problem for the case $\gamma=-p$.

On the contrary, in \cite{1998-CPAM-Gobbino,2001-PRSE-GM} the argument relies on a ``horizontal'' discretization \emph{à la Riemann}, namely on considering the restriction of $u$ to the points of the form $x+\delta i$.

Here none of these techniques seem to work, in the sense that the vertical discretization is less effective in simplifying the problem, since the set $\Egpl$ here depends also on the horizontal differences $|y-x|$, while the horizontal discretization with a fixed step produces a trivial estimate in the limit. This is the reason for which we need to introduce the more complicated dyadic decomposition.

\paragraph{\textmd{\textit{Structure of the paper}}}

The rest of this paper is organized as follows. In section~\ref{sec:gatto} we introduce the dyadic functional and we prove its main properties, namely the representation formula of Proposition~\ref{prop:dyadic_repr}, and the estimate from below of Proposition~\ref{prop:est_DF}. In section~\ref{sec:proofs} we prove the estimate from below for the cell problem (Proposition~\ref{prop:gatto}), and we conclude the proof of Theorem~\ref{thm:main}, both in the one-dimensional and in the general case.


\setcounter{equation}{0}
\section{The dyadic functional}\label{sec:gatto}

This section is the technical core of this paper. Here we introduce a discretized version of (\ref{defn:Fgpl}) and we prove its main properties.

Let $\gamma > 0$, $p\geq 1$, and $\lambda>0$ be real numbers. Let us consider the function
\begin{equation}
\psigpl(\delta,\Delta):=
\begin{cases}
1 &\mbox{if }\Delta > \lambda \delta^{1+\gamma/p}\\
0 &\mbox{if }\Delta \leq \lambda \delta^{1+\gamma/p}
\end{cases}
\qquad\quad
\forall(\delta, \Delta)\in [0,+\infty)^2.
\label{defn:psi-gpl}
\end{equation}

Let $\D:=\{i/2^{k}:i\in \Z, \ k\in\N \}$ denote the set of \emph{dyadic numbers}. For every integer $k\geq 0$, a \emph{$k$-dyadic interval} is any interval of the form
\begin{equation}
\left[\frac{i}{2^k},\frac{i+1}{2^k}\right]
\label{defn:D-int}
\end{equation}
for some $i\in \Z$. For every interval $(a,b)\subseteq\R$ and every integer $k\geq 0$, we consider the $k$-dyadic intervals contained in $(a,b)$, which are indexed by the set
\begin{equation}\label{defn:Dk}
\D_k(a,b):=\left\{i\in \Z: \left[\frac{i}{2^k},\frac{i+1}{2^k}\right]\subseteq(a,b) \right\}.
\end{equation}

For every $\delta>0$ and every function $v:\D\cap(a,b) \to \R$, the \emph{dyadic functional} is defined as
\begin{equation}\label{defn:DF}
\DFgpl(\delta,v,(a,b)):=\sum_{k=0} ^{\infty} \frac{1}{2^{k(\gamma+1)}} \sum_{i\in \D_k(a,b)} \psigpl\left(\frac{\delta}{2^k}, \left|v\left(\frac{i+1}{2^k}\right)-v\left(\frac{i}{2^k}\right)\right| \right).
\end{equation}

We are now ready to state the two properties of the dyadic functional that are the main technical tools in the proof of our main result. The first one is the following representation formula, which shows that in dimension one the functional (\ref{defn:Fgpl}) can be expressed in terms of dyadic functionals.

\begin{prop}[Dyadic representation]\label{prop:dyadic_repr}

Let $\gamma>0$, $p\geq 1$, and $B\geq A$ be real numbers. Let $(a,b)\subseteq\re$ be an interval with length $b-a\geq 1$, and let $u:(a,b)\to [A,B]$ be a measurable function. 

For every $x\in \R$, and every real number $\delta>0$, let us consider the numbers
\begin{equation}
a_{x,\delta}:=\frac{a-x}{\delta}
\qquad\text{and}\qquad
b_{x,\delta}:=\frac{b-x}{\delta},
\label{defn:abxd}
\end{equation}
and the function $v_{x,\delta}:\D\cap (a_{x,\delta},b_{x,\delta})\to \R$ defined by
\begin{equation}\label{defn:v_x,delta}
v_{x,\delta}(q) :=u(x+q\delta) \qquad \forall q\in \D\cap (a_{x,\delta}, b_{x,\delta}).
\end{equation}

Then for every real number $\lambda\geq B-A$ it turns out that
\begin{equation}\label{th:dyadic_repr}
\Fgpl(u,(a,b))=2\lambda^p \int_{1/2} ^1  d\delta \, \delta^{\gamma-1} \int_{0} ^{\delta} \DFgpl\left(\delta,v_{x,\delta},(a_{x,\delta}, b_{x,\delta}) \right)\,dx.
\end{equation}

\end{prop}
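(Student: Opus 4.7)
The identity is a Fubini-style rearrangement: the outer integral over $(\delta,x) \in (1/2,1) \times (0,\delta)$ combined with the dyadic scale $k$ exactly parametrizes the set of ``increments'' $\eta \in (0,1)$ with a $2^k$-to-$1$ cover, and the prefactors have been tuned so that everything cancels cleanly. I would prove the equality by reducing both sides to the same explicit integral.

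First I would simplify the left-hand side. Writing out the definition of $\Fgpl$ in dimension one, using the symmetry of the integrand, and substituting $\eta = y-x$, one gets
\begin{equation*}
\Fgpl(u,(a,b)) = 2\lambda^p \int_0^{b-a} \eta^{\gamma-1} \int_a^{b-\eta} \psigpl\bigl(\eta, |u(x+\eta)-u(x)|\bigr)\,dx\,d\eta.
\end{equation*}
The hypotheses now enter: since $|u(x+\eta)-u(x)| \leq B-A \leq \lambda \leq \lambda \eta^{1+\gamma/p}$ whenever $\eta \geq 1$, the function $\psigpl$ vanishes there, so the $\eta$-integral may be truncated to $(0,\min\{1,b-a\}) = (0,1)$, using $b-a \geq 1$.

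For the right-hand side, I would unfold the definition of $\DFgpl$ from (\ref{defn:DF}), swap sum and integral by Tonelli (the integrand is non-negative), and treat each $k\geq 0$ separately. In the $k$-th term, substitute $\eta = \delta/2^k$: the range $\delta \in (1/2,1)$ becomes $\eta \in (1/2^{k+1},1/2^k)$, and $\delta^{\gamma-1}\,d\delta = 2^{k\gamma}\eta^{\gamma-1}\,d\eta$. The pair of arguments of $\psigpl$ becomes $(\eta, |u(x+(i+1)\eta) - u(x+i\eta)|)$, and the admissibility condition $i \in \D_k(a_{x,\delta},b_{x,\delta})$ becomes simply $a < x + i\eta$ and $x + (i+1)\eta < b$. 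The key combinatorial observation is that the map
\begin{equation*}
(x,i) \in (0, 2^k \eta) \times \Z \;\longmapsto\; y := x + i\eta \in \R
\end{equation*}
is (a.e.) exactly $2^k$-to-one, since the condition $0 < y - i\eta < 2^k\eta$ picks out $2^k$ consecutive integers $i$ for each $y$. Therefore
\begin{equation*}
\int_0^{2^k \eta} \sum_i \psigpl\bigl(\eta, |u(x+(i+1)\eta)-u(x+i\eta)|\bigr)\,dx = 2^k \int_a^{b-\eta} \psigpl\bigl(\eta, |u(y+\eta)-u(y)|\bigr)\,dy.
\end{equation*}

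Finally, the prefactors $2^{-k(\gamma+1)}$ (from the definition of $\DFgpl$), $2^{k\gamma}$ (from the Jacobian), and $2^k$ (from the counting) multiply to $1$, so the $k$-th contribution is exactly the integral of $\eta^{\gamma-1}\int_a^{b-\eta} \psigpl(\eta,|u(y+\eta)-u(y)|)\,dy$ over $\eta \in (1/2^{k+1}, 1/2^k)$. Since these intervals partition $(0,1)$, summing over $k \geq 0$ reconstructs $\int_0^1$, yielding
\begin{equation*}
2\lambda^p \int_0^1 \eta^{\gamma-1} \int_a^{b-\eta} \psigpl\bigl(\eta, |u(y+\eta)-u(y)|\bigr)\,dy\,d\eta,
\end{equation*}
which matches the truncated left-hand side obtained in the first step. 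The only potentially delicate point is keeping the bookkeeping of the three powers of $2$ straight and justifying the change in order of summation and integration, but both are routine once the non-negativity of the integrand and the exact cancellation of the prefactors are noted.
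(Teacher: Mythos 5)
Your proposal is correct and follows essentially the same route as the paper: both reduce the one-dimensional functional to an iterated integral over the increment and the base point, truncate the increment to $(0,1]$ using $\lambda\geq B-A$, and match the dyadic scale $k$ with the increment range $(2^{-(k+1)},2^{-k}]$ with exactly the same cancellation of the factors $2^{-k(\gamma+1)}$, $2^{k\gamma}$ and $2^{k}$. The only cosmetic difference is that you unfold the right-hand side and use the $2^k$-to-one counting of the map $(x,i)\mapsto x+i\eta$, whereas the paper builds the dyadic functional from the left-hand side and phrases that same counting as the $\delta$-periodicity of the auxiliary function $\ggpl(\cdot,\delta)$.
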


The assumption that $b-a\geq 1$ in the previous statement is not really essential, but it simplifies the notation in the proof. The second main tool is an estimate from below for the dyadic functional in terms of the variation of $v$ in $(a,b)$.

\begin{prop}[Asymptotic cost of oscillations for the dyadic functional]\label{prop:est_DF}

Let $\gamma>0$, $p\geq 1$, and $B\geq A$ be real numbers. Let $(a,b)\subseteq\re$ be an interval, and let $v:\D\cap(a,b) \to [A,B]$ be a function.

Let us assume that there exist two integers $\alpha$ and $\beta$, with $a<\alpha<\beta<b$, such that $v(\alpha)=A$ and $v(\beta)=B$.

Then for every $\delta\in[1/2,1]$, and every real number $\lambda\geq 2^{1+\gamma/p}(B-A)$, it turns out that
\begin{equation}\label{th:est_DF}
\DFgpl(\delta,v,(a,b))\geq 
\frac{1}{2^{\gamma+1}-1}\cdot\frac{1}{\lambda^p \delta^{p+\gamma}}
\cdot \frac{(B-A)^p}{(\beta-\alpha)^{p-1}}.
\end{equation}
\end{prop}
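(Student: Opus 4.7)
The plan is to reduce the general case ($\beta - \alpha$ arbitrary) to the unit case $\beta - \alpha = 1$ via super-additivity together with a power-mean inequality, and then to prove the unit case by a level-by-level combinatorial analysis of bad dyadic intervals.

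\textbf{Reduction to the unit case.} Split $[\alpha, \beta]$ into $n := \beta - \alpha$ closed unit sub-intervals $[\alpha + j, \alpha + j + 1]$ for $j = 0, \ldots, n - 1$. Every dyadic interval $[i/2^k, (i+1)/2^k]$ of length $1/2^k \leq 1$ is contained in exactly one closed unit sub-interval, so letting $\mathcal{G}_j$ denote the contribution to (\ref{defn:DF}) from dyadic intervals contained in $[\alpha + j, \alpha + j + 1]$, we obtain the super-additivity $\DFgpl(\delta, v, (a, b)) \geq \sum_{j=0}^{n-1} \mathcal{G}_j$. Assuming the \emph{unit estimate} $\mathcal{G}_j \geq D_j^p / [(2^{\gamma+1} - 1) \lambda^p \delta^{p+\gamma}]$ with $D_j := |v(\alpha+j+1) - v(\alpha+j)|$, the triangle inequality $\sum_j D_j \geq B - A$ combined with the power-mean inequality (valid for $p \geq 1$), which gives $\sum_j D_j^p \geq (B - A)^p / n^{p-1}$, completes the reduction.

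\textbf{Unit case.} Translate so that the sub-interval is $[0, 1]$ and let $D := |v(1) - v(0)|$. The hypotheses $\lambda \geq 2^{1+\gamma/p}(B - A) \geq 2^{1+\gamma/p} D$ and $\delta \in [1/2, 1]$ force $\tau_0 := \lambda \delta^{1+\gamma/p} \geq D$, hence the level-$0$ interval $[0,1]$ is never bad. At each level $k \geq 1$ the triangle inequality yields $\sum_i J_{k,i} \geq D$ where $J_{k,i} := |v((i+1)/2^k) - v(i/2^k)|$ and the sum runs over the $2^k$ dyadic intervals tiling $[0, 1]$; splitting this sum by ``bad'' intervals (with jumps bounded by $B - A$) versus ``good'' ones (bounded by $\tau_k := \tau_0 / 2^{k(1+\gamma/p)}$) yields a Chebyshev-type lower bound on the bad count $N_k$. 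The sharp constant $1/(2^{\gamma+1} - 1)$ should then emerge from summing the weighted contributions $N_k / 2^{k(\gamma+1)}$ over $k \geq 1$, exploiting the geometric decay ratio $\tau_{k+1}/\tau_k = 2^{-(1+\gamma/p)} < 1/2$ together with the series identity $\sum_{k \geq 1} 2^{-k(\gamma+1)} = 1/(2^{\gamma+1} - 1)$.

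\textbf{Main obstacle.} Obtaining the sharp constant. A direct bound of the form ``$N_k \geq 1$ once $k$ is sufficiently large'' recovers the correct scaling but with a sub-optimal constant. The extremal configurations saturating the estimate appear to be ``dyadic staircases'' where $v$ makes $2^j$ equal jumps of size $D/2^j$ at equally spaced dyadic positions, for an integer $j$ depending on the ratio $D/\tau_0$; a direct computation shows they achieve equality in the unit estimate. The core technical challenge is proving the unit estimate for arbitrary $v$ by showing it is dominated by the appropriate extremal staircase—a delicate combinatorial fact that likely requires either an induction on the levels of the dyadic tree or a rearrangement-type argument to extract the sharp constant $1/(2^{\gamma+1} - 1)$.
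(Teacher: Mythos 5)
Your reduction to unit subintervals is sound: the super-additivity of $\DFgpl$ over the unit blocks $[\alpha+j,\alpha+j+1]$, the triangle inequality $\sum_j D_j\geq B-A$, and the power-mean step $\sum_j D_j^p\geq (B-A)^p/(\beta-\alpha)^{p-1}$ are all correct, and they would indeed yield (\ref{th:est_DF}) \emph{if} the unit estimate $\mathcal{G}_j\geq \frac{1}{2^{\gamma+1}-1}\,\frac{D_j^p}{\lambda^p\delta^{p+\gamma}}$ were established (this replaces the Hölder step that the paper performs globally on $[\alpha,\beta]$, and your identification of the dyadic staircases as equality cases is consistent with that estimate being sharp). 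The genuine gap is that this unit estimate — which you yourself flag as ``the core technical challenge'' — is exactly the content of the proposition and is left unproved. Worse, the mechanism you sketch for it cannot work as stated: the level-by-level Chebyshev count compares $D$ with $2^k\tau_k$ and with the crude bound $B-A$ on visible jumps, and since the number of visible intervals per level forced this way is at most of order $D/(B-A)\le 1$, it effectively only gives ``$N_k\geq 1$ once $2^k\tau_k<D$''. Summing the weights $2^{-k(\gamma+1)}$ over those levels produces a lower bound of order $(D/\tau_0)^{p(\gamma+1)/\gamma}$ (with $\tau_0=\lambda\delta^{1+\gamma/p}$), which is a strictly worse \emph{power} of $D/\tau_0$ than the required $(D/\tau_0)^p$; so the issue is not merely a sub-optimal constant, and no geometric-series bookkeeping at fixed levels can repair it.

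What is missing is precisely the multi-scale bookkeeping that the paper introduces: treating levels independently discards the fact that when few intervals are visible at a given level, the variation must be carried by ``barely invisible'' intervals, which then generate visible intervals at \emph{every} finer level. The paper formalizes this through the visible/important dyadic intervals: important intervals are pairwise essentially disjoint, every visible interval lies in a unique important one, each important $h$-interval contains a visible interval at every level $k>h$ (this is where $\gamma>0$ enters), and telescoping $B-A$ along the non-visible partition intervals gives $B-A\leq\lambda\delta^{1+\gamma/p}\sum_k|\Imp_k|2^{-k(1+\gamma/p)}$, after which Hölder (using $\sum_k|\Imp_k|2^{-k}\leq\beta-\alpha$) and the summation $\sum_{k>h}2^{-k(\gamma+1)}=2^{-h(\gamma+1)}/(2^{\gamma+1}-1)$ deliver the constant. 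Your proposal anticipates the correct statement and the correct extremals, but without an argument of this tree-based type (your suggested ``induction on the levels'' or rearrangement is only named, not carried out) the proof is incomplete at its decisive step.
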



\subsection{Proof of Proposition~\ref{prop:dyadic_repr}}

Let us consider the function defined in (\ref{defn:psi-gpl}). When $N=1$ and $\Omega=(a,b)$ is an interval, we can rewrite (\ref{defn:Fgpl}) in the form
\begin{eqnarray}
\Fgpl(u,(a,b)) & = &
2\lambda^p \int_{a} ^{b} dx \int_{x}^{b} (y-x)^{\gamma-1} \psigpl(y-x,|u(y)-u(x)|)\,dy
\nonumber
\\
& = &
2\lambda^p \int_{0}^{b-a}\delta^{\gamma-1}\,d\delta\int_{a}^{b-\delta} 
\psigpl(\delta,|u(x+\delta)-u(x)|)\,dx.
\label{functional_delta}
\end{eqnarray}

Now by assumption $u$ takes its values in $[A,B]$, and $\lambda\geq B-A$. This means that, for every $\delta\in(1,b-a)$, it turns out that
\begin{equation*}
|u(x+\delta)-u(x)|\leq B-A\leq\lambda\leq \lambda \delta^{1+\gamma/p},
\end{equation*}
for every $x\in(a,b-\delta)$, and hence
\begin{equation*}
\psigpl(\delta,|u(x+\delta)-u(x)|)=0
\end{equation*}
for every $x\in(a,b-\delta)$. As a consequence, in (\ref{functional_delta}) we can restrict ourselves to values $\delta\leq 1$, so that
\begin{equation}
\Fgpl(u,(a,b))=
2\lambda^p \int_{0}^{1}\delta^{\gamma-1}\,d\delta\int_{a}^{b-\delta} 
\psigpl(\delta,|u(x+\delta)-u(x)|)\,dx.
\label{eqn:Fgpl-delta}
\end{equation}

Now from (\ref{defn:abxd}), and (\ref{defn:Dk}) with $k=0$, we obtain that
\begin{equation*}
    \forall i\in\Z
    \qquad
    \left[x\in (a-i\delta,b-(i+1)\delta)
    \ \Longleftrightarrow\ 
    i\in \D_0(a_{x,\delta},b_{x,\delta})\strut\right],
\end{equation*}
and as a consequence
\begin{eqnarray*}
\lefteqn{\hspace{-3em}
\int_{a} ^{b-\delta}  
\psigpl(\delta,|u(x+\delta)-u(x)|)\,dx}
\\
\qquad &=& \sum_{i\in \Z} \int_{(a,b-\delta)\cap (i\delta,(i+1)\delta)} \psigpl(\delta,|u(x+\delta)-u(x)|)\,dx \\
&=& \sum_{i\in \Z} \int_{(0,\delta)\cap (a-i\delta,b-(i+1)\delta)} \psigpl(\delta,|u(x+(i+1)\delta)-u(x+i\delta)|)\,dx\\
&=& \int_{0} ^{\delta} \sum_{i\in \D_0(a_{x,\delta},b_{x,\delta})} \psigpl(\delta,|v_{x,\delta}(i+1)-v_{x,\delta}(i)|)\,dx
\end{eqnarray*}
for every $\delta\in(0,b-a)$. If for every $x\in\R$ and every $\delta>0$ we set for brevity
\begin{equation}
\ggpl(x,\delta):=
\sum_{i\in \D_0(a_{x,\delta},b_{x,\delta})} \psigpl(\delta,|v_{x,\delta}(i+1)-v_{x,\delta}(i)|),
\label{defn:ggpl}
\end{equation}
then (\ref{eqn:Fgpl-delta}) can be written as
\begin{equation*}
\Fgpl(u,(a,b))= 
2\lambda^p \int_{0} ^{1}\delta^{\gamma-1} \,d\delta \int_{0} ^{\delta} \ggpl(x,\delta)\,dx,
\end{equation*}
and in turn this integral can be decomposed as
\begin{eqnarray}
\Fgpl(u,(a,b)) & = & 
2\lambda^p\sum_{k=0}^{\infty}
\int_{2^{-(k+1)}} ^{2^{-k}}\delta^{\gamma-1}\, d\delta \int_{0} ^{\delta} \ggpl(x,\delta)\,dx
\nonumber
\\[1ex]
& = & 
2\lambda^{p}\sum_{k=0}^{\infty}
\int_{1/2}^{1}\frac{\delta^{\gamma-1}}{2^{k\gamma}}\,d\delta \int_{0} ^{\delta/2^{k}} \ggpl\left(x,\frac{\delta}{2^{k}}\right) dx.
\label{eqn:Fgpl-decomp}
\end{eqnarray}

Now we observe that, for every $\delta>0$, the function $x \mapsto \ggpl(x,\delta)$ defined by (\ref{defn:ggpl}) is periodic with period $\delta$. As a consequence, for every non-negative integer  $k$ the function $x \mapsto \ggpl(x,\delta/2^{k})$ is periodic with period $\delta/2^{k}$, and therefore
\begin{equation*}
\int_{0} ^{\delta/2^{k}} \ggpl\left(x,\frac{\delta}{2^{k}}\right)dx
=\frac{1}{2^{k}} \sum_{j=1} ^{2^k} \int_{(j-1)\delta/2^{k}} ^{j\delta/2^{k}} \ggpl\left(x,\frac{\delta}{2^{k}}\right)dx
=\frac{1}{2^{k}} \int_{0} ^{\delta} \ggpl\left(x,\frac{\delta}{2^{k}}\right)dx.
\end{equation*}

Plugging this identity into (\ref{eqn:Fgpl-decomp}) we obtain that
\begin{eqnarray}
\Fgpl(u,(a,b)) & = &    
2\lambda^{p} \sum_{k=0}^{\infty}\int_{1/2}^{1} 
\frac{\delta^{\gamma-1}}{2^{k(\gamma+1)}}\,d\delta \int_{0}^{\delta} \ggpl\left(x,\frac{\delta}{2^{k}}\right)\,dx
\nonumber
\\
& = &
2\lambda^{p} \int_{1/2}^{1}\delta^{\gamma-1}\, d\delta\int_{0}^{\delta}
\left\{\sum_{k=0}^{\infty}\frac{1}{2^{k(\gamma+1)}}\ggpl\left(x,\frac{\delta}{2^{k}}\right)\right\}\,dx.
\label{eqn:Fgpl-sum-ggpl}
\end{eqnarray}

Finally, from (\ref{defn:ggpl}) and (\ref{defn:Dk}) we deduce that

\begin{eqnarray*}
\ggpl\left(x,\frac{\delta}{2^{k}}\right) & = &
\sum_{i\in \D_0\left(a_{x,\delta/2^{k}}, b_{x,\delta/2^{k}}\right)} 
\psigpl\left(\frac{\delta}{2^{k}},|v_{x,\delta/2^{k}}(i+1) -v_{x,\delta/2^{k}}(i)|\right)
\\
& = & 
\sum_{i\in \D_k(a_{x,\delta}, b_{x,\delta})}
\psigpl\left(\frac{\delta}{2^{k}},\left|v_{x,\delta} \left(\frac{i+1}{2^{k}} \right) - v_{x,\delta}\left(\frac{i}{2^{k}}\right) \right|\right),
\end{eqnarray*}
and therefore comparing with (\ref{defn:DF}) we conclude that
\begin{equation*}
\sum_{k=0}^{\infty}\frac{1}{2^{k(\gamma+1)}}\ggpl\left(x,\frac{\delta}{2^{k}}\right)=
\DFgpl\left(\delta,v_{x,\delta},(a_{x,\delta}, b_{x,\delta})) \right).
\end{equation*}

Plugging this equality into (\ref{eqn:Fgpl-sum-ggpl}) we obtain exactly (\ref{th:dyadic_repr}).
\qed


\subsection{Proof of Proposition~\ref{prop:est_DF}}
Let us introduce some notation. We say that two intervals are \emph{essentially disjoint} if they do not share internal points, namely if they intersect at most at the boundary.

We say that a $k$-dyadic interval of the form (\ref{defn:D-int}) is \emph{visible} if
\begin{equation*}
\left|v\left(\frac{i+1}{2^k}\right) - v\left(\frac{i}{2^k}\right) \right| >\lambda \left(\frac{\delta}{2^{k}}\right)^{1+\gamma/p},
\end{equation*}
and we introduce the set
\begin{equation*}
    \Vis_k:=
    \left\{
    i\in\Z:
    \left[\frac{i}{2^k},\frac{i+1}{2^k}\right]\subseteq[\alpha,\beta] 
    \mbox{ is visible} 
    \right\}
\end{equation*}
that indexes all $k$-dyadic intervals that are visible and contained in $[\alpha,\beta]$ (for the sake of shortness, we do not write explicitly the dependence of $\Vis$ on $\gamma$, $p$, $\lambda$, $\delta$, $\alpha$, $\beta$, which are fixed parameters in this proof).

We observe that visible dyadic intervals are exactly those whose endpoints give a positive contribution in the right-hand side of (\ref{defn:DF}), so that
\begin{equation}
\DFgpl(\delta,v,(a,b))\geq
\sum_{k=0}^\infty \frac{|\Vis_k|}{2^{k(\gamma+1)}},
\label{est:DF-Vis}
\end{equation}
where as usual the bars denote the number of elements of a set, and the inequality is due to the fact that we are neglecting the contribution of visible intervals that are not contained in $[\alpha,\beta]$.

Now we introduce the set of $k$-dyadic intervals that are \emph{important}. The definition is recursive in $k$. We say that a $k$-dyadic interval of the form (\ref{defn:D-int}) is important if it verifies the following three conditions.
\begin{enumerate}
\renewcommand{\labelenumi}{(\roman{enumi})}
\item It is not visible.

\item At least one of its two halves is visible, namely either
\begin{equation*}
\left|v\left(\frac{2i+1}{2^{k+1}}\right) - v\left(\frac{i}{2^{k}}\right) \right| >\lambda \left(\frac{\delta}{2^{k+1}}\right)^{1+\gamma/p}
\end{equation*}
or
\begin{equation*}
\left|v\left(\frac{i+1}{2^{k}}\right) - v\left(\frac{2i+1}{2^{k+1}}\right) \right| >\lambda \left(\frac{\delta}{2^{k+1}}\right)^{1+\gamma/p}.
\end{equation*}

\item It is not contained in any important $h$-dyadic interval with $h\leq k-1$ (this condition is void when $k=0$).

\end{enumerate}

We also introduce the set
\begin{equation*}
    \Imp_k:=
    \left\{
    i\in\Z:
    \left[\frac{i}{2^k},\frac{i+1}{2^k}\right]\subseteq[\alpha,\beta] 
    \mbox{ is important} 
    \right\}
\end{equation*}
that indexes all $k$-dyadic intervals that are important and contained in $[\alpha,\beta]$.

\paragraph{\textmd{\textit{Properties of visible and important dyadic intervals}}}

In the sequel we need the following properties.
\begin{enumerate}[label={(\arabic*)}]

\item Important dyadic intervals (even with different $k$'s) are pairwise essentially disjoint.

\item  All $0$-dyadic intervals are not visible, namely $\Vis_0=\emptyset$.

\item Every dyadic interval that is visible is contained in a (unique) dyadic interval that is important. More precisely, if $I$ is a $k$-dyadic interval that is visible, then there exists a unique non-negative integer $h\leq k-1$, and a unique $h$-dyadic interval $J$ that is important and such that $I\subseteq J$. 

\item If $I$ is a $k$-dyadic interval that is important, then for every $h\geq k+1$ there exists a $h$-dyadic interval that is visible and contained in $I$.

\end{enumerate}

Let us prove the four properties.

\begin{itemize}

    \item The first one is trivial if the two important intervals are $k$-dyadic intervals with the same $k$. Otherwise, we observe that two dyadic intervals are not essentially disjoint if and only if one is contained in the other, but this is never possible if the two intervals are important because of the third condition in the definition of important dyadic intervals.

    \item  As for the second property, we recall that by assumption $v$ takes its values in $[A,B]$, and moreover $\lambda\geq 2^{1+\gamma/p}(B-A)$ and $\delta\geq 1/2$. If follows that
    \begin{equation*}
    \left|v\left(\frac{i+1}{2^0}\right)-v\left(\frac{i}{2^0}\right)\right|\leq 
    B-A\leq 
    \frac{\lambda}{2^{1+\gamma/p}}\leq
    \lambda\left(\frac{\delta}{2^0}\right)^{1+\gamma/p} \qquad\forall i\in \D_0(a,b),
    \end{equation*}
    which means that the interval $[i,i+1]$ is not visible.

    \item  As for the third property, we argue by induction on $k$. If $k=0$, the conclusion is trivial because there are no visible $0$-dyadic intervals. 
    
    So let us assume that the second property is true up to some $k-1$ and let $I$ be a $k$-dyadic interval of the form (\ref{defn:D-int}) that is visible. Let $I'$ be the unique $(k-1)$-dyadic interval containing $I$, namely either $I'=[i/2^{k},(i+2)/2^{k}]$ if $i$ is even, or $I'=[(i-1)/2^{k},(i+1)/2^{k}]$ if $i$ is odd. Then there are two cases.
    \begin{itemize}
        \item If $I'$ is visible, by the inductive assumption it is contained in some important $h$-dyadic interval, and this important interval also contains $I$.

        \item If $I'$ is not visible, then it satisfies the first two properties in the definition of important intervals. Hence, either it is important (so it is an important interval containing $I$) or it is contained in some important $h$-dyadic interval with $h\leq k-2$ (and this important interval also contains $I$).
    \end{itemize}

    This concludes the proof of the inductive step, hence the existence part of the third property. Uniqueness follows from the first statement.

    \item As for the last property, we argue by induction on $h\geq k+1$. The case $h=k+1$ is an immediate consequence of the second condition in the definition of important intervals.

    So let us assume that $I$ contains a visible $h$-dyadic interval $J=[j/2^h, (j+1)/2^h]$ for some $h\geq k+1$. Then we claim that at least one of the two halves of $J$ is visible, and hence $I$ contains a visible $(h+1)$-dyadic interval. Indeed, if this were false, then we would deduce that (here we exploit that $\gamma>0$)
    \begin{multline*}
    \left|v\left(\frac{j+1}{2^h}\right)-v\left(\frac{j}{2^h}\right)\right|
    \leq
    \left|v\left(\frac{j+1}{2^h}\right)-v\left(\frac{2j+1}{2^{h+1}}\right)\right| + 
    \left|v\left(\frac{2j+1}{2^{h+1}}\right)-v\left(\frac{j}{2^h}\right)\right|
    \\[0.5ex]
    \leq 2\lambda \left(\frac{\delta}{2^{h+1}}\right)^{1+\gamma/p}
    \leq\lambda \left(\frac{\delta}{2^h}\right)^{1+\gamma/p}.
    \qquad\quad
    \end{multline*}

    But this means that $J$ is not visible, which is a contradiction.

\end{itemize}

\paragraph{\textmd{\textit{Estimate from above for $B-A$}}}

We show that
\begin{equation}\label{est_J}
(B-A)^p\leq \lambda^p\delta^{p+\gamma}\cdot (\beta-\alpha)^{p-1}\cdot \sum_{k=0}^{\infty}\frac{|\Imp_k|}{2^{k(\gamma+1)}}.
\end{equation}

To this end, let us fix a positive integer $m$, and let us consider all important $k$-dyadic intervals with $k\leq m$ that are contained in $[\alpha,\beta]$. The complement of their union in $[\alpha,\beta]$ is the union of a suitable set of $m$-dyadic intervals. More precisely, we obtain a partition of the form
\begin{equation*}
[\alpha,\beta]= 
\bigcup_{k=0} ^{m} \bigcup_{i\in \Imp_k} \left[\frac{i}{2^k},\frac{i+1}{2^k}\right]
\cup \bigcup_{i\in R_m} \left[\frac{i}{2^m},\frac{i+1}{2^m}\right],
\end{equation*}
where $R_m\subseteq\D_m(a,b)$ is a suitable index set, and all the intervals that appear in the right-hand side are pairwise essentially disjoint.

The key point is that all the intervals in the right-hand side are not visible. Indeed, important dyadic intervals are not visible because of the first condition in their definition, while all $m$-dyadic intervals that are visible are contained in an important $k$-dyadic interval for some $k\leq m-1$, thanks to the property~(3) of the previous paragraph, and therefore they do not appear in the union indexed by $R_m$.

Since all these intervals are not visible, we can estimate the difference between the values on $v$ at the endpoints of each interval in terms of the length of the interval itself. We obtain that
\begin{eqnarray*}
B-A&=&|v(\beta)-v(\alpha)|\\
&\leq & \sum_{k=0} ^{m}\sum_{i\in \Imp_k} \left|v\left(\frac{i+1}{2^k}\right) - v\left(\frac{i}{2^k}\right)\right| + \sum_{i\in R_m} \left|v\left(\frac{i+1}{2^m}\right) - v\left(\frac{i}{2^m}\right)\right|\\
&\leq & \sum_{k=0} ^{m}\sum_{i\in \Imp_k} \lambda\left(\frac{\delta}{2^k}\right)^{1+\gamma/p} + \sum_{i\in R_m} \lambda\left(\frac{\delta}{2^m}\right)^{1+\gamma/p}\\
&= & \sum_{k=0} ^{m} \lambda |\Imp_k| \left(\frac{\delta}{2^k}\right)^{1+\gamma/p} + |R_m| \lambda\left(\frac{\delta}{2^m}\right)^{1+\gamma/p}.
\end{eqnarray*}

Now we observe that $|R_m|\leq (\beta-\alpha)\cdot 2^m$, and therefore letting $m\to +\infty$ we deduce that (here again we exploit that $\gamma>0$)
\begin{equation*}
B-A\leq \lambda \delta^{1+\gamma/p} \sum_{k=0}^{\infty} \frac{|\Imp_k|}{2^{k(1+\gamma/p)}}.
\end{equation*}

At this point by Hölder inequality we conclude that
\begin{eqnarray*}
B-A &\leq & \lambda \delta^{1+\gamma/p} \left(\sum_{k=0}^{\infty} \frac{|\Imp_k|}{2^{k(1+\gamma)}}\right)^{1/p} \left(\sum_{k=0}^{\infty} \frac{|\Imp_k|}{2^k}\right)^{(p-1)/p}\\
&\leq & \lambda \delta^{1+\gamma/p} \left(\sum_{k=0}^{\infty} \frac{|\Imp_k|}{2^{k(1+\gamma)}}\right)^{1/p} (\beta-\alpha)^{(p-1)/p},
\end{eqnarray*}
which is equivalent to (\ref{est_J}).

\paragraph{\textmd{\textit{Conclusion}}}

From (\ref{est:DF-Vis}) and property~(2) of visible dyadic intervals we know that
\begin{equation}\label{DF=sum_vis}
\DFgpl(\delta,v,(a,b))\geq\sum_{k=1}^\infty \frac{1}{2^{k(\gamma+1)}} |\Vis_k|.
\end{equation}

Moreover, from properties~(1) and~(4) of dyadic intervals we deduce that
\begin{equation*}
|\Vis_k| \geq \sum_{h=0} ^{k-1} |\Imp_h|,
\end{equation*}

Plugging this inequality into (\ref{DF=sum_vis}), and rearranging the addenda, we obtain that
\begin{multline*}
    \qquad
    \DFgpl(\delta,v,(a,b)) \geq 
    \sum_{k=1}^\infty\frac{1}{2^{k(\gamma+1)}}\sum_{h=0}^{k-1} |\Imp_h|
    \\
    =\sum_{h=0}^\infty |\Imp_h| \sum_{k=h+1}^\infty \frac{1}{2^{k(\gamma+1)}}=
    \frac{1}{2^{\gamma+1}-1} \cdot\sum_{h=0}^\infty
    \frac{|\Imp_h|}{2^{h(\gamma+1)}}.
    \qquad
\end{multline*}

Recalling (\ref{est_J}), this inequality implies (\ref{th:est_DF}).
\qed


\setcounter{equation}{0}
\section{Proof of the main result}\label{sec:proofs}

In this section we prove Theorem~\ref{thm:main}. Following the classical approach, we consider the cell problem that consists in computing, in dimension one, the minimal energy of a function that oscillates between two values $A$ and $B$ in some interval $(a,b)$. In the following result we estimate this minimal energy by relying on the dyadic theory of section~\ref{sec:gatto}. Throughout this section $\Leb$ denotes the one-dimensional Lebesgue measure.


\begin{prop}[Lower bound for the cell problem]\label{prop:gatto}

Let $\gamma>0$, $p\geq 1$, and $B\geq A$ be real numbers. Let $k$ be a non-negative integer, let $(a,b)\subseteq\re$ be an interval with length $b-a\geq 3\cdot 2^{-k}$, and let $u:(a,b)\to [A,B]$ be a measurable function. 

Let us assume that there exists a real number $\ep>0$ such that
\begin{equation}\label{hp:density_ep}
\Leb\left(\{ x\in (a,a+2^{-k}): u(x)\neq A \}\cup 
\{ x\in (b-2^{-k},b): u(x)\neq B \}\right)\leq 
\frac{\ep}{2}\cdot 2^{-k}.
\end{equation}

Then for every real number $\lambda \geq 2^{(k+1)(1+\gamma/p)}(B-A)$ it holds that
\begin{equation}\label{th:gatto}
\Fgpl (u,(a,b))\geq (1-\ep) 2c_\gamma \cdot\frac{(B-A)^p}{(b-a)^{p-1}},
\end{equation}
where $c_\gamma$ is the constant defined by (\ref{defn:c_gamma}).
\end{prop}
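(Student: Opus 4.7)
The overall strategy is to reduce to the case $k=0$ by rescaling, invoke the representation formula of Proposition~\ref{prop:dyadic_repr} to express $\Fgpl(u,(a,b))$ as a double integral of dyadic functionals, and then apply Proposition~\ref{prop:est_DF} pointwise in the integration variables, carefully tracking the exceptional set coming from (\ref{hp:density_ep}).

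Setting $\mu:=2^{-k}$, $\tilde u(y):=u(\mu y)$ on $(\tilde a,\tilde b):=\mu^{-1}(a,b)$, and $\tilde\lambda:=\lambda\mu^{1+\gamma/p}$, a direct change of variables in (\ref{defn:Fgpl}) gives
\begin{equation*}
\Fgpl(u,(a,b))=\mu^{1-p}F_{\gamma,p,\tilde\lambda}(\tilde u,(\tilde a,\tilde b)).
\end{equation*}
The rescaled interval has length $\tilde b-\tilde a\geq 3$, the hypothesis on $\lambda$ becomes $\tilde\lambda\geq 2^{1+\gamma/p}(B-A)$ (which legitimizes Proposition~\ref{prop:dyadic_repr} and Proposition~\ref{prop:est_DF}), and (\ref{hp:density_ep}) becomes
\begin{equation*}
\Leb\{y\in(\tilde a,\tilde a+1):\tilde u(y)\neq A\}+\Leb\{y\in(\tilde b-1,\tilde b):\tilde u(y)\neq B\}\leq \varepsilon/2.
\end{equation*}

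For each $\delta\in[1/2,1]$ and a.e.\ $x\in(0,\delta)$ I would pick the integer candidates $\alpha_x:=\lceil a_{x,\delta}\rceil$ and $\beta_x:=\lfloor b_{x,\delta}\rfloor$, and let $G(\delta)\subseteq(0,\delta)$ be the set of $x$ for which $v_{x,\delta}(\alpha_x)=A$ and $v_{x,\delta}(\beta_x)=B$. Since $\tilde b-\tilde a\geq 3$, these integers satisfy $a_{x,\delta}<\alpha_x<\beta_x<b_{x,\delta}$ with $\beta_x-\alpha_x\leq(\tilde b-\tilde a)/\delta$, so Proposition~\ref{prop:est_DF} yields
\begin{equation*}
\DFgpl(\delta,v_{x,\delta},(a_{x,\delta},b_{x,\delta}))\geq \frac{1}{(2^{\gamma+1}-1)\,\tilde\lambda^p\,\delta^{\gamma+1}}\cdot\frac{(B-A)^p}{(\tilde b-\tilde a)^{p-1}}\qquad\forall x\in G(\delta).
\end{equation*}
The crux is the estimate of $\Leb(G(\delta))$. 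The key observation is that $T_\delta(x):=x+\alpha_x\delta$ is, up to a null set, a measure-preserving bijection from $(0,\delta)$ onto $(\tilde a,\tilde a+\delta)$: the integer $\alpha_x$ is constant on each of the at most two maximal subintervals of $(0,\delta)$, on which $T_\delta$ is a translation with unit Jacobian, and the two images tile $(\tilde a,\tilde a+\delta)$. An analogous statement holds for $S_\delta(x):=x+\beta_x\delta$ mapping onto $(\tilde b-\delta,\tilde b)$. Consequently
\begin{equation*}
\Leb\bigl((0,\delta)\setminus G(\delta)\bigr)\leq\Leb\{t\in(\tilde a,\tilde a+1):\tilde u(t)\neq A\}+\Leb\{t\in(\tilde b-1,\tilde b):\tilde u(t)\neq B\}\leq\varepsilon/2.
\end{equation*}

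Plugging $\Leb(G(\delta))\geq\delta-\varepsilon/2$ into the dyadic representation (\ref{th:dyadic_repr}), the factors $\tilde\lambda^p$ cancel and the $\delta$-powers simplify to $\delta^{-2}$, yielding
\begin{equation*}
F_{\gamma,p,\tilde\lambda}(\tilde u,(\tilde a,\tilde b))\geq \frac{2(B-A)^p}{(2^{\gamma+1}-1)(\tilde b-\tilde a)^{p-1}}\int_{1/2}^1\delta^{-2}(\delta-\varepsilon/2)\,d\delta=\frac{2(B-A)^p(\log 2-\varepsilon/2)}{(2^{\gamma+1}-1)(\tilde b-\tilde a)^{p-1}}.
\end{equation*}
Since $\log 2>1/2$ one has $\log 2-\varepsilon/2\geq(1-\varepsilon)\log 2$ for every $\varepsilon\geq 0$, and undoing the scaling via $(\tilde b-\tilde a)^{p-1}=\mu^{1-p}(b-a)^{p-1}$ produces exactly (\ref{th:gatto}). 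The main technical obstacle is the verification that $T_\delta$ and $S_\delta$ are measure-preserving bijections; this requires a short case analysis on whether $a_{x,\delta}$ (respectively $b_{x,\delta}$) crosses an integer value as $x$ varies in $(0,\delta)$, which always happens at most once because $\delta\leq 1$.
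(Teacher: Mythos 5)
Your proposal is correct and follows essentially the same route as the paper: reduce to $k=0$ by the homothety $\mu=2^{-k}$ (the paper performs this rescaling at the end rather than at the start, which is immaterial), then combine the dyadic representation of Proposition~\ref{prop:dyadic_repr} with Proposition~\ref{prop:est_DF} applied on the good set of shifts $x$, whose measure is controlled via the same measure-preserving maps $x\mapsto x+\alpha_x\delta$ and $x\mapsto x+\beta_x\delta$ together with hypothesis (\ref{hp:density_ep}). The only cosmetic deviations are your choice $\alpha_x=\lceil a_{x,\delta}\rceil$, $\beta_x=\lfloor b_{x,\delta}\rfloor$ (which agrees with the paper's $\lfloor a_{x,\delta}\rfloor+1$, $\lceil b_{x,\delta}\rceil-1$ for a.e.\ $x$) and the final bound $\log 2-\ep/2\geq(1-\ep)\log 2$ in place of the paper's pointwise estimate $\delta-\ep/2\geq(1-\ep)\delta$, both of which are fine.
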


\begin{proof}

To begin with, we prove the result when $k=0$, and then we deduce the general case through a homothety argument.

\subparagraph{\textmd{\textit{Case $k=0$}}}

Let us fix $\delta\in[1/2,1]$ and $x\in(a,b)$. Let us consider the interval $(a_{x,\delta},b_{x,\delta})$ whose endpoints are defined by (\ref{defn:abxd}), and let us denote by $[\alpha_{x,\delta},\beta_{x,\delta}]$ the largest closed interval with integer endpoints contained in $(a_{x,\delta},b_{x,\delta})$, namely the interval with endpoints
\begin{equation*}
    \alpha_{x,\delta}:=\lfloor a_{x,\delta}\rfloor+1
    \qquad\text{and}\qquad
    \beta_{x,\delta}:=\lceil b_{x,\delta}\rceil-1.
\end{equation*}
(note that $\beta_{x,\delta}\geq\alpha_{x,\delta}+1$ because when $k=0$ we assumed that $b-a\geq 3$).

Now let $X_\delta$ denote the sets of points $x\in(0,\delta)$ such that
\begin{equation*}
v_{x,\delta}(\alpha_{x,\delta})=u(x+\alpha_{x,\delta}\delta)=A
\qquad \text{and} \qquad
v_{x,\delta}(\beta_{x,\delta})=u(x+\beta_{x,\delta}\delta)=B.
\end{equation*}

We observe that
\begin{equation*}
    x+\alpha_{x,\delta}\delta=
    \begin{cases}
    x+\delta(\lfloor a/\delta \rfloor+1) &
    \mbox{if }x\in \left(0,\delta\{a/\delta\}\right),
    \\[0.5ex]
    x+\delta \lfloor a/\delta \rfloor &
    \mbox{if }x\in \left(\delta\{a/\delta\},\delta\right),
    \end{cases}
\end{equation*}
where $\{a/\delta\}$ denotes the fractional part of $a/\delta$. This shows that $x\mapsto x+\alpha_{x,\delta}\delta$ is a measure preserving map from $(0,\delta)$ to $(a,a+\delta)$, and hence
\begin{equation*}
    \Leb\left(\{x\in(0,\delta):v_{x,\delta}(\alpha_{x,\delta})\neq A\}\right)=
    \Leb\left(\{y\in(a,a+\delta):u(y)\neq A\}\right).
\end{equation*}

In an analogous way we obtain that
\begin{equation*}
    \Leb\left(\{x\in(0,\delta):v_{x,\delta}(\beta_{x,\delta})\neq B\}\right)=
    \Leb\left(\{y\in(b-\delta,b):u(y)\neq B\}\right).
\end{equation*}

Since
\begin{equation*}
    (0,\delta)\setminus X_{\delta}=
    \left\{x\in(0,\delta):v_{x,\delta}(\alpha_{x,\delta})\neq A\right\}\cup
    \left\{x\in(0,\delta):v_{x,\delta}(\beta_{x,\delta})\neq B\right\},
\end{equation*}
from assumption (\ref{hp:density_ep}) with $k=0$ we deduce that
\begin{eqnarray*}
    \Leb((0,\delta)\setminus X_{\delta}) & \leq &
    \Leb\left(\{y\in(a,a+\delta):u(y)\neq A\}\right)+
    \Leb\left(\{y\in(b-\delta,b):u(y)\neq B\}\right)
    \\
    & \leq &
    \Leb\left(\{y\in(a,a+1):u(y)\neq A\}\right)+
    \Leb\left(\{y\in(b-1,b):u(y)\neq B\}\right)
    \\
    & \leq &
    \frac{\ep}{2},
\end{eqnarray*}
and hence, recalling that $\delta\geq 1/2$,
\begin{equation}\label{leb_Xdelta}
\Leb(X_\delta)\geq \delta - \frac{\ep}{2} \geq (1-\ep)\delta.
\end{equation}

Now we observe that, for every $x$ in $X_\delta$, the function $v_{x,\delta}$ defined by (\ref{defn:v_x,delta}) satisfies the assumptions of Proposition~\ref{prop:est_DF} with $(a,b):=(a_{x,\delta},b_{x,\delta})$ and $[\alpha,\beta]:=[\alpha_{x,\delta},\beta_{x,\delta}]$, so that from (\ref{th:est_DF}) we obtain that
\begin{equation}
\DFgpl(\delta,v_{x,\delta},(a_{x,\delta},b_{x,\delta}))\geq 
\frac{1}{2^{\gamma+1}-1}\cdot\frac{1}{\lambda^p \delta^{p+\gamma}}
\cdot\frac{(B-A)^p}{(\beta_{x,\delta}-\alpha_{x,\delta})^{p-1}}
\label{est:DF-abxd}
\end{equation}
whenever $\lambda\geq 2^{1+\gamma/p}(B-A)$, which is exactly our assumption on $\lambda$ in the case $k=0$. Since
\begin{equation*}
\beta_{x,\delta}-\alpha_{x,\delta}\leq 
b_{x,\delta}-a_{x,\delta} = 
\frac{b-a}{\delta},
\end{equation*}
from (\ref{est:DF-abxd}) we conclude that 
\begin{equation*}
\DFgpl(\delta,v_{x,\delta},(a_{x,\delta},b_{x,\delta}))\geq 
\frac{1}{2^{\gamma+1}-1}\cdot\frac{1}{\lambda^p \delta^{1+\gamma}}
\cdot\frac{(B-A)^p}{(b-a)^{p-1}}
\end{equation*}
for every $\delta\in[1/2,1]$, every $x\in X_\delta$, and every $\lambda\geq 2^{1+\gamma/p}(B-A)$. At this point from Proposition~\ref{prop:dyadic_repr} we deduce that
\begin{eqnarray*}
\Fgpl(u,(a,b)) & = & 
2\lambda^p \int_{1/2} ^1  d\delta \, \delta^{\gamma-1} \int_{0} ^{\delta} \DFgpl\left(\delta,v_{x,\delta},(a_{x,\delta}, b_{x,\delta}) \right)\,dx
\\
& \geq & 
2\lambda^p \int_{1/2} ^1  d\delta \, \delta^{\gamma-1} \int_{X_\delta} \DFgpl\left(\delta,v_{x,\delta},(a_{x,\delta}, b_{x,\delta}) \right)\,dx
\\
& \geq & 
\frac{2}{2^{\gamma+1}-1}\cdot\frac{(B-A)^p}{(b-a)^{p-1}} 
\int_{1/2}^{1}\frac{\Leb(X_\delta)}{\delta^2}\,d\delta.
\end{eqnarray*}

Recalling (\ref{leb_Xdelta}) we conclude that
\begin{equation*}
\Fgpl(u,(a,b))\geq 
(1-\ep)\frac{2}{2^{\gamma+1}-1}\cdot\frac{(B-A)^p}{(b-a)^{p-1}} \int_{1/2} ^1 \frac{1}{\delta}\, d\delta=
(1-\ep) \frac{2\log 2}{2^{\gamma+1}-1}\cdot\frac{(B-A)^p}{(b-a)^{p-1}},
\end{equation*}
which is exactly (\ref{th:gatto}).

\subparagraph{\textmd{\textit{Case $k\geq 1$}}}

Let us consider the function $u_k(x):=u(2^{-k}x)$. With a change of variables we obtain that
\begin{equation}\label{eq:change_of_variables_2^k}
\Fgpl(u,(a,b))=2^{k(p-1)}F_{\gamma,p,\lambda_k}(u_k,(a_k,b_k)),
\end{equation}
where 
\begin{equation*}
    \lambda_k:=2^{-k(1+\gamma/p)}\lambda,
    \qquad\quad
    a_k:=2^k a,
    \qquad\quad
    b_k:=2^k b.
\end{equation*}

We observe that the assumptions on $b-a$ and $\lambda$ imply in this case that $b_k-a_k\geq 3$ and $\lambda_k\geq 2^{1+\gamma/p}(B-A)$, while (\ref{hp:density_ep}) implies that
\begin{equation*}
\Leb\left(\{ x\in (a_k,a_k+1): u_k(x)\neq A \}\cup 
\{ x\in (b_k-1,b_k): u_k(x)\neq B \}\strut\right)\leq 
\frac{\ep}{2}.
\end{equation*}

Hence, from the case $k=0$, we conclude that
\begin{equation*}
F_{\gamma,p,\lambda_k}(u_k,(a_k,b_k))\geq 
(1-\ep) 2 c_\gamma \cdot\frac{(B-A)^p}{(b_k-a_k)^{p-1}}=
(1-\ep)2 c_\gamma\cdot\frac{1}{2^{k(p-1)}}\cdot\frac{(B-A)^p}{(b-a)^{p-1}}.
\end{equation*}

Plugging this inequality into (\ref{eq:change_of_variables_2^k}) we obtain (\ref{th:gatto}) also in this case.
\end{proof}


\subsection{Proof of Theorem~\ref{thm:main} in dimension one}\label{sec:proof-1D}

In this section we prove Theorem~\ref{thm:main} in the case $N=1$. To this end, we consider any open set $\Omega\subseteq \R$, any function $u \in L^1 _{\loc}(\Omega)$, and any family $\{u_{\lambda}\} \subseteq L^1 _{\loc} (\Omega)$ such that $u_{\lambda}\to u$ in $L^1 _{\loc} (\Omega)$ as $\lambda\to +\infty$. Since $C_{1,p}=2$ for every $p\geq 1$, in this case we need to prove that
\begin{equation}
\liminf_{\lambda\to +\infty} \Fgpl(u_\lambda,\Omega)\geq 2 c_{\gamma} F_p(u,\Omega),
\label{th:1D}
\end{equation}
where $c_\gamma$ is the constant defined by (\ref{defn:c_gamma}). 

\paragraph{\textmd{\textit{Measure estimate near Lebesgue points}}}

Let $[c,d]\subseteq\Omega$ be an interval whose endpoints $c$ and $d$ are Lebesgue points of $u$. We claim that, for every $\ep>0$, there exists an integer $k_{\ep}\geq 0$ and a real number $\lambda_\ep>0$ such that $d-c\geq 3\cdot 2^{-k_\ep}$, and in addition
\begin{equation}
    \Leb\left(\{x\in(c,c+2^{-k_\ep}):u_{\lambda}(x)>u(c)+2\ep\}\right)<
    \frac{\ep}{4}\cdot 2^{-k_\ep}
    \qquad
    \forall\lambda\geq\lambda_\ep,
    \label{th:leb-c}
\end{equation}
and
\begin{equation}
    \Leb\left(\{x\in(d-2^{-k_\ep},d):u_{\lambda}(x)<u(d)-2\ep\}\right)<
    \frac{\ep}{4}\cdot 2^{-k_\ep}
    \qquad
    \forall\lambda\geq\lambda_\ep,
    \label{th:leb-d}
\end{equation}

To this end, we begin by observing that the definition of Lebesgue point implies that
\begin{equation*}
    \lim_{k\to +\infty}
    \frac{\Leb\left(\{x\in(c,c+2^{-k}):u(x)>u(c)+\ep\}\right)}{2^{-k}}=
    0,
\end{equation*}
and hence
\begin{equation}
    \Leb\left(\{x\in(c,c+2^{-k}):u(x)>u(c)+\ep\}\right)\leq
    \frac{\ep}{8}\cdot 2^{-k}
    \label{defn:leb-pt-c}
\end{equation}
for every $k$ large enough. In the same way we obtain that
\begin{equation}
    \Leb\left(\{x\in(d-2^{-k},d):u(x)<u(d)-\ep\}\right)\leq
    \frac{\ep}{8}\cdot 2^{-k}
    \label{defn:leb-pt-d}
\end{equation}
when $k$ is large enough. Now we choose an integer $k_\ep$, with $d-c\geq 3\cdot 2^{-k_\ep}$, such that both (\ref{defn:leb-pt-c}) and (\ref{defn:leb-pt-d}) hold true, and we claim that we can find $\lambda_\ep$ such that both (\ref{th:leb-c}) and (\ref{th:leb-d}) hold true. 

Indeed, let us assume by contradiction that there exists a sequence $\lambda_i\to +\infty$ such that one of them, say (\ref{th:leb-c}) without loss of generality, is false, namely
\begin{equation}
    \Leb\left(\{x\in(c,c+2^{-k_\ep}):u_{\lambda_i}(x)>u(c)+2\ep\}\right)\geq
    \frac{\ep}{4}\cdot 2^{-k_\ep}
    \label{th:leb-absurd}
\end{equation}
for every index $i$. Now we observe that
\begin{eqnarray*}
    \lefteqn{
    \left\{x\in(c,c+2^{-k_\ep}):u_{\lambda_i}(x)-u(x)\geq\ep\right\}
    }
    \\[0.5ex]
    & \subseteq & 
    \left\{x\in(c,c+2^{-k_\ep}):u_{\lambda_i}(x)\geq u(c)+2\ep,\ u(x)\leq u(c)+\ep\right\}
    \\[0.5ex]
    & = &
    \left\{x\in(c,c+2^{-k_\ep}):u_{\lambda_i}(x)\geq u(c)+2\ep\right\}
    \setminus
    \left\{x\in(c,c+2^{-k_\ep}):u(x)> u(c)+\ep\right\},
\end{eqnarray*}
and therefore from (\ref{th:leb-absurd}) and (\ref{defn:leb-pt-c}) it follows that
\begin{equation*}
    \Leb\left(\left\{x\in(c,c+2^{-k_\ep}):u_{\lambda_i}(x)-u(x)\geq\ep\right\}\right)\geq
    \frac{\ep}{4}\cdot 2^{-k_\ep}-\frac{\ep}{8}\cdot 2^{-k_\ep}=
    \frac{\ep}{8}\cdot 2^{-k_\ep},
\end{equation*}
which in turn implies that
\begin{equation*}
    \int_{c}^{c+2^{-k_\ep}}|u_{\lambda_i}(x)-u(x)|\,dx\geq
    \frac{\ep}{8}\cdot 2^{-k_\ep}\cdot 2^{-k_\ep}
\end{equation*}
for every index $i$, which contradicts the fact that $u_{\lambda_i}\to u$ in $L^1_{\loc}(\Omega)$.

\paragraph{\textmd{\textit{Estimate from below between Lebesgue points}}}

Let us assume that $\Omega=(a,b)$ is an interval, and let $c<d$ be two Lebesgue points of $u$ in $(a,b)$. We claim that
\begin{equation}
\liminf_{\lambda \to +\infty} \Fgpl(u_\lambda,(c,d))\geq 2c_\gamma \frac{|u(d)-u(c)|^p}{(d-c)^{p-1}}.
\label{liminf_leb_pts}
\end{equation}

To begin with, we observe that (\ref{liminf_leb_pts}) is trivial if $u(d)=u(c)$, and otherwise we can assume, up to a change of sign, that $u(d)> u(c)$. 

Now we fix a positive real number $\ep< (u(d)-u(c))/4$, we set $A_\ep:=u(c)+2\ep$ and $B_\ep:=u(d)-2\ep$, and we consider the truncated function
\begin{equation*}
    u_{\ep,\lambda}(x):=\min\{\max\{u_{\lambda}(x),A_\ep\},B_\ep\}.
\end{equation*}

We observe that
\begin{equation*}
    |u_{\ep,\lambda}(y)-u_{\ep,\lambda}(x)|\leq |u_{\lambda}(y)-u_{\lambda}(x)|
    \qquad
    \forall (x,y)\in (c,d)^2,
\end{equation*}
and therefore
\begin{equation}
\Fgpl(u_{\lambda},(c,d))\geq \Fgpl(u_{\ep,\lambda},(c,d)). 
\label{th:Fu>Fuep}
\end{equation}

The key point is that, thanks to (\ref{th:leb-c}) and  (\ref{th:leb-d}), the function $u_{\ep,\lambda}$ satisfies the assumptions of Proposition~\ref{prop:gatto} in the interval $(c,d)$, with parameters
\begin{equation*}
    A:=A_\ep,
    \qquad
    B:=B_\ep,
    \qquad
    k:=k_\ep,
\end{equation*}
and therefore from (\ref{th:gatto}) we deduce that
\begin{equation*}
\Fgpl(u_{\ep,\lambda},(c,d))\geq 
(1-\ep) 2 c_\gamma \cdot\frac{(B_\ep-A_\ep)^p}{(b-a)^{p-1}}=
(1-\ep) 2c_\gamma \cdot\frac{(u(d)-u(c)-4\ep)^p}{(b-a)^{p-1}},
\end{equation*}
for every
\begin{equation*}
\lambda\geq\max \left\{\lambda_\ep , 2^{(k_\ep+1)(1+\gamma/p)}(B_\ep-A_\ep)\right\}.
\end{equation*}

Keeping (\ref{th:Fu>Fuep}) into account, this implies that
\begin{equation*}
\liminf_{\lambda\to +\infty} \Fgpl(u_{\lambda},(c,d))\geq 
(1-\ep) 2c_\gamma \cdot\frac{(u(d)-u(c)-4\ep)^p}{(b-a)^{p-1}}.
\end{equation*}

Letting $\ep\to 0^+$ we obtain (\ref{liminf_leb_pts}).

\paragraph{\textmd{\textit{Estimate from below in an interval}}}

Let us assume again that $\Omega=(a,b)$ is an interval, and let $x_1<\ldots < x_m$ be Lebesgue points of $u$ in $(a,b)$. Applying (\ref{liminf_leb_pts}) in each of the intervals $(x_{i},x_{i+1})$ we deduce that
\begin{equation*}
\liminf_{\lambda\to +\infty} \Fgpl(u_{\lambda},(a,b))\geq
\liminf_{\lambda\to +\infty} \sum_{i=1} ^{m-1} \Fgpl(u_{\lambda},(x_{i},x_{i+1})) \geq
2c_\gamma \sum_{i=1} ^{m-1} \frac{(u(x_{i+1})-u(x_{i}))^p}{(x_{i+1}-x_{i})^{p-1}},
\end{equation*}
and we conclude by recalling that the right-hand side of (\ref{th:1D}) is the supremum of the right-hand side of the latter over all possible choices of the Lebesgue points $x_1$, \ldots, $x_m$ (see for example \cite[Lemma~3.5]{2020-APDE-AGMP}).

\paragraph{\textmd{\textit{Estimate from below in a general open set}}}

If $\Omega\subseteq \R$ is any bounded open set, we can write it as the union of pairwise disjoint open intervals (its connected components), namely 
\begin{equation*}
\Omega=\bigcup_{i\in I} (a_i,b_i),
\end{equation*}
for some finite or countable set of indices $I$. Applying (\ref{th:1D}) in each of these intervals we obtain that
\begin{multline*}
\qquad
\liminf_{\lambda\to +\infty} \Fgpl(u_\lambda,\Omega)\geq
\sum_{i\in I} \liminf_{\lambda\to +\infty} \Fgpl(u_\lambda,(a_i,b_i))
\\
\geq\sum_{i\in I} 2c_\gamma F_p(u,(a_i,b_i))=
2c_\gamma F_p(u,\Omega),
\qquad
\end{multline*}
which completes the proof of (\ref{th:1D}) when $\Omega$ is bounded. If $\Omega$ is unbounded, we apply the result in $\Omega\cap(-R,R)$ and then we let $R\to +\infty$. 
\qed

\subsection{Proof of Theorem~\ref{thm:main} in higher dimension}\label{sec:proof-HD}

In this section we prove Theorem~\ref{thm:main} in any space dimension. The argument relies on a standard slicing technique, and it is the same used in \cite{2023-Picenni,2023-Lahti}.

Let $\Omega \subseteq \R^N$ be an open set, let $\sigma\in \s^{N-1}$ be a unit vector, and let $\sigma^\perp$ be the subspace of $\R^N$ orthogonal to $\sigma$. Let $\Omega_\sigma \subseteq \sigma^\perp$ denote the orthogonal projection of $\Omega$ onto $\sigma^\perp$.

For every $y\in \Omega_\sigma$ we set $\Omega_{y,\sigma}:=\{t\in\R: y+\sigma t \in \Omega \}$, and we consider the one-dimensional slice $u_{y,\sigma}:\Omega_{y,\sigma} \to \R$  of $u$ defined by $u_{y,\sigma}(t):=u(y+\sigma t)$.

The theory in \cite{AFP} implies that
\begin{equation}\label{sectioning_norm}
\int_{\s^{N-1}} d\mathcal{H}^{N-1}(\sigma) 
\int_{\Omega_\sigma} F_p(u_{y,\sigma}, \Omega_{y,\sigma}) \,dy = 
C_{N,p}\cdot F_p (u,\Omega)
\end{equation}
for every $p\geq 1$ and every $u\in L^1_\loc(\Omega)$. Moreover, with a change of variables (see the general formula in \cite[page~622]{2020-APDE-AGMP}) we obtain that
\begin{equation}\label{sectioning_functionals}
\Fgpl(u,\Omega)=
\frac{1}{2}\int_{\s^{N-1}} d\mathcal{H}^{N-1}(\sigma) 
\int_{\Omega_\sigma} \Fgpl(u_{y,\sigma},\Omega_{y,\sigma})\,dy,
\end{equation}
again for every $u\in L^1_\loc(\Omega)$ and every admissible value of $\gamma$, $p$, $\lambda$ (more details can be found in \cite[Lemma~3.1]{2023-Picenni} when $p=1$ and $\Omega=\R^N$, but the same argument works also in this more general case).

Now let $\{u_\lambda\}\subseteq L^1_{\loc} (\Omega)$ be a family such that $u_{\lambda}\to u$ in $L^1_{\loc} (\Omega)$ and
$$\liminf_{\lambda\to +\infty} \Fgpl(u_{\lambda},\Omega)<+\infty.$$

Then there exists a sequence $\lambda_n\to +\infty$ such that 
$$\liminf_{\lambda\to +\infty} \Fgpl(u_{\lambda},\Omega) = 
\lim_{n\to +\infty} F_{\gamma,p,\lambda_n}(u_{\lambda_n},\Omega),$$
and such that the sequence $u_{\lambda_n}(x)$ converges to $u(x)$ for almost every $x\in\Omega$, and is dominated by some function in $L^1_\loc(\Omega)$. In particular, when we consider this sequence, for every $\sigma\in \s^{N-1}$ it holds that $(u_{\lambda_n})_{y,\sigma}\to u_{y,\sigma}$ in $L^1_{\loc} (\Omega_{y,\sigma})$ for almost every $y\in \Omega_\sigma$.

Therefore, from the integral geometric representations (\ref{sectioning_norm}) and (\ref{sectioning_functionals}), Fatou's lemma, and the one-dimensional result, we conclude that
\begin{eqnarray*}
\liminf_{\lambda\to +\infty} \Fgpl(u_{\lambda},\Omega)
&=&\lim_{n\to +\infty} F_{\gamma,p,\lambda_n}(u_{\lambda_n},\Omega)
\\
&=&\lim_{n\to +\infty} \frac{1}{2}\int_{\s^{N-1}} d\mathcal{H}^{N-1}(\sigma) \int_{\Omega_\sigma} F_{\gamma,p,\lambda_n}((u_{\lambda_n})_{y,\sigma},\Omega_{y,\sigma})\,dy\\
&\geq & \frac{1}{2}\int_{\s^{N-1}} d\mathcal{H}^{N-1}(\sigma) \int_{\Omega_\sigma} \left\{ \liminf_{n\to +\infty} F_{\gamma,p,\lambda_n}((u_{\lambda_n})_{y,\sigma},\Omega_{y,\sigma})\right\}dy\\
&\geq & \frac{1}{2}\int_{\s^{N-1}} d\mathcal{H}^{N-1}(\sigma) \int_{\Omega_\sigma} 2c_\gamma F_p(u_{y,\sigma}, \Omega_{y,\sigma}) \,dy\\
&=& C_{N,p}\cdot c_\gamma\cdot F_p(u,\Omega),
\end{eqnarray*}
which completes the proof.
\qed

\subsubsection*{\centering Acknowledgments}

The authors are grateful to H.~Brezis for sharing with them preliminary versions of~\cite{Brezis-Lincei,BSVY}, and for encouraging them to investigate the open problems stated therein. 

The authors are members of the \selectlanguage{italian} ``Gruppo Nazionale per l'Analisi Matematica, la Probabilità e le loro Applicazioni'' (GNAMPA) of the ``Istituto Nazionale di Alta Matematica'' (INdAM).

The authors acknowledge the MIUR Excellence Department Project awarded to the Department of Mathematics, University of Pisa, CUP I57G22000700001.

\selectlanguage{english}


\begin{thebibliography}{10}
 \providecommand{\url}[1]{\texttt{#1}}
 \providecommand{\urlprefix}{URL }
 \providecommand{\selectlanguage}[1]{\relax}
 \providecommand{\eprint}[2][]{\url{#2}}

\bibitem{AFP}
\textsc{L.~Ambrosio}, \textsc{N.~Fusco}, \textsc{D.~Pallara}.
\newblock \emph{Functions of bounded variation and free discontinuity problems}.
\newblock Oxford Mathematical Monographs. The Clarendon Press, Oxford University Press, New York, 2000.

\bibitem{2018-CRAS-AGMP}
\textsc{C.~Antonucci}, \textsc{M.~Gobbino}, \textsc{M.~Migliorini}, \textsc{N.~Picenni}.
\newblock On the shape factor of interaction laws for a non-local approximation of the {S}obolev norm and the total variation.
\newblock \emph{C. R. Math. Acad. Sci. Paris} \textbf{356} (2018), no.~8, 859--864.

\bibitem{2020-APDE-AGMP}
\textsc{C.~Antonucci}, \textsc{M.~Gobbino}, \textsc{M.~Migliorini}, \textsc{N.~Picenni}.
\newblock Optimal constants for a nonlocal approximation of {S}obolev norms and total variation.
\newblock \emph{Anal. PDE} \textbf{13} (2020), no.~2, 595--625.

\bibitem{2020-APDE-AGP}
\textsc{C.~Antonucci}, \textsc{M.~Gobbino}, \textsc{N.~Picenni}.
\newblock On the gap between the {G}amma-limit and the pointwise limit for a nonlocal approximation of the total variation.
\newblock \emph{Anal. PDE} \textbf{13} (2020), no.~3, 627--649.

\bibitem{BBM}
\textsc{J.~Bourgain}, \textsc{H.~Brezis}, \textsc{P.~Mironescu}.
\newblock Another look at {S}obolev spaces.
\newblock In \emph{Optimal control and partial differential equations}, pages 439--455. IOS, Amsterdam, 2001.

\bibitem{2006-CRAS-BN}
\textsc{J.~Bourgain}, \textsc{H.-M. Nguyen}.
\newblock A new characterization of {S}obolev spaces.
\newblock \emph{C. R. Math. Acad. Sci. Paris} \textbf{343} (2006), no.~2, 75--80.

\bibitem{Brezis-Lincei}
\textsc{H.~Brezis}.
\newblock Some of my favorite open problems.
\newblock \emph{Rend. Accad. Lincei} (to appear).

\bibitem{2018-AnnPDE-BN}
\textsc{H.~Brezis}, \textsc{H.-M. Nguyen}.
\newblock Non-local functionals related to the total variation and connections with image processing.
\newblock \emph{Ann. PDE} \textbf{4} (2018), no.~1, Paper No. 9, 77.

\bibitem{2020-CCM-BN}
\textsc{H.~Brezis}, \textsc{H.-M. Nguyen}.
\newblock {$\Gamma$}-convergence of non-local, non-convex functionals in one dimension.
\newblock \emph{Commun. Contemp. Math.} \textbf{22} (2020), no.~7, 1950077, 27.

\bibitem{BSVY}
\textsc{H.~Brezis}, \textsc{A.~Seeger}, \textsc{J.~Van~Schaftingen}, \textsc{P.-L. Yung}.
\newblock Families of functionals representing sobolev norms.
\newblock \emph{Anal. PDE} (to appear).

\bibitem{2022-BSVY-Lincei}
\textsc{H.~Brezis}, \textsc{A.~Seeger}, \textsc{J.~Van~Schaftingen}, \textsc{P.-L. Yung}.
\newblock Sobolev spaces revisited.
\newblock \emph{Atti Accad. Naz. Lincei Rend. Lincei Mat. Appl.} \textbf{33} (2022), no.~2, 413--437.

\bibitem{2021-BVY-CalcVar}
\textsc{H.~Brezis}, \textsc{J.~Van~Schaftingen}, \textsc{P.-L. Yung}.
\newblock Going to {L}orentz when fractional {S}obolev, {G}agliardo and {N}irenberg estimates fail.
\newblock \emph{Calc. Var. Partial Differential Equations} \textbf{60} (2021), no.~4, Paper No. 129, 12.

\bibitem{2021-BVY-PNAS}
\textsc{H.~Brezis}, \textsc{J.~Van~Schaftingen}, \textsc{P.-L. Yung}.
\newblock A surprising formula for {S}obolev norms.
\newblock \emph{Proc. Natl. Acad. Sci. USA} \textbf{118} (2021), no.~8, Paper No. e2025254118, 6.

\bibitem{2022-JFA-DLYYZ}
\textsc{F.~Dai}, \textsc{X.~Lin}, \textsc{D.~Yang}, \textsc{W.~Yuan}, \textsc{Y.~Zhang}.
\newblock Poincar\'{e} inequality meets {B}rezis-{V}an {S}chaftingen-{Y}ung formula on metric measure spaces.
\newblock \emph{J. Funct. Anal.} \textbf{283} (2022), no.~9, Paper No. 109645, 52.

\bibitem{2023-CalcVar-DLYYZ}
\textsc{F.~Dai}, \textsc{X.~Lin}, \textsc{D.~Yang}, \textsc{W.~Yuan}, \textsc{Y.~Zhang}.
\newblock Brezis--{V}an~{S}chaftingen--{Y}ung formulae in ball {B}anach function spaces with applications to fractional {S}obolev and {G}agliardo-{N}irenberg inequalities.
\newblock \emph{Calc. Var. Partial Differential Equations} \textbf{62} (2023), no.~2, Paper No. 56, 73.

\bibitem{Davila-BBM}
\textsc{J.~D\'{a}vila}.
\newblock On an open question about functions of bounded variation.
\newblock \emph{Calc. Var. Partial Differential Equations} \textbf{15} (2002), no.~4, 519--527.

\bibitem{1998-CPAM-Gobbino}
\textsc{M.~Gobbino}.
\newblock Finite difference approximation of the {M}umford-{S}hah functional.
\newblock \emph{Comm. Pure Appl. Math.} \textbf{51} (1998), no.~2, 197--228.

\bibitem{2001-PRSE-GM}
\textsc{M.~Gobbino}, \textsc{M.~G. Mora}.
\newblock Finite-difference approximation of free-discontinuity problems.
\newblock \emph{Proc. Roy. Soc. Edinburgh Sect. A} \textbf{131} (2001), no.~3, 567--595.

\bibitem{2023-Lahti}
\textsc{P.~Lahti}.
\newblock A sharp lower bound for a class of non-local approximations of the total variation.
\newblock ArXiv:2310.03550.

\bibitem{2006-Nguyen-JFA}
\textsc{H.-M. Nguyen}.
\newblock Some new characterizations of {S}obolev spaces.
\newblock \emph{J. Funct. Anal.} \textbf{237} (2006), no.~2, 689--720.

\bibitem{2008-JEMS-Nguyen}
\textsc{H.-M. Nguyen}.
\newblock Further characterizations of {S}obolev spaces.
\newblock \emph{J. Eur. Math. Soc. (JEMS)} \textbf{10} (2008), no.~1, 191--229.

\bibitem{2011-Nguyen-Duke}
\textsc{H.-M. Nguyen}.
\newblock {$\Gamma$}-convergence, {S}obolev norms, and {BV} functions.
\newblock \emph{Duke Math. J.} \textbf{157} (2011), no.~3, 495--533.

\bibitem{2011-CalcVar-Nguyen}
\textsc{H.-M. Nguyen}.
\newblock Some inequalities related to {S}obolev norms.
\newblock \emph{Calc. Var. Partial Differential Equations} \textbf{41} (2011), no. 3-4, 483--509.

\bibitem{2023-Picenni}
\textsc{N.~Picenni}.
\newblock New estimates for a class of non-local approximations of the total variation.
\newblock ArXiv:2307.16471.

\bibitem{2022-Poliakovsky-JFA}
\textsc{A.~Poliakovsky}.
\newblock Some remarks on a formula for {S}obolev norms due to {B}rezis, {V}an {S}chaftingen and {Y}ung.
\newblock \emph{J. Funct. Anal.} \textbf{282} (2022), no.~3, Paper No. 109312, 47.

\bibitem{2023-CCM-ZYY}
\textsc{C.~Zhu}, \textsc{D.~Yang}, \textsc{W.~Yuan}.
\newblock Brezis–Seeger–Van Schaftingen–Yung-type characterization of homogeneous ball Banach Sobolev spaces and its applications.
\newblock \emph{Commun. Contemp. Math.} (2023), Paper No.~2350041.
\newblock \eprint{https://doi.org/10.1142/S0219199723500414}.

\bibitem{2023-CalcVar-ZYY}
\textsc{C.~Zhu}, \textsc{D.~Yang}, \textsc{W.~Yuan}.
\newblock Generalized {B}rezis--{S}eeger--{V}an {S}chaftingen--{Y}ung formulae and their applications in ball {B}anach {S}obolev spaces.
\newblock \emph{Calc. Var. Partial Differential Equations} \textbf{62} (2023), no.~8, Paper No. 234, 76.


\end{thebibliography}


\label{NumeroPagine}

\end{document}